\newcommand{\barint}{
         \rule[.036in]{.12in}{.009in}\kern-.16in
          \displaystyle\int  }
\def\R{{\mathbb{R}}}
\def\r{{\mathbb{R}}}
\def\N{{\mathbb{N}}}
\def\rn{{\mathbb{R}^{N}}}
\newcommand{\supp}{{\mathrm{supp}}}
\newcommand{\mconvd}{\xrightarrow[\delta\to 0]{mod}}
\def\rp{{[0,\infty)}} 
\def\ve{{\varepsilon}} 
\newcommand{\wt}{\widetilde}
\newcommand{\vp}{\varphi}
\newcommand{\bu}{{\bar{u}}}
\newcommand{\dv}{\mathrm{div}} 
\newcommand{\cA}{{\cal A}}
\newtheorem{theorem}{Theorem}
\newtheorem{definition}{Definition}[section]
\newtheorem{proposition}{Proposition}[section]
\newtheorem{lemma}{Lemma}[section]
\newtheorem{corollary}{Corollary}[section]
\newtheorem{example}{Example}[section]
\newtheorem{remark}{Remark}[section]
\begin{document}
\begin{frontmatter} 
\title{Elliptic problems with growth in  nonreflexive Orlicz spaces\\  and with
measure or $L^1$ data}  

\author{
Iwona Chlebicka \\
Institute of  Mathematics, Polish Academy of Sciences,\\ ul. \'{S}niadeckich 8, 00-656 Warsaw, Poland\\ 
e-mail: i.chlebicka@mimuw.edu.pl\\
and\\
Flavia Giannetti
 \\
Dipartimento di Matematica e Applicazioni "R.
Caccioppoli" \\ Universit\`a  degli Studi di Napoli\\
Via Cintia, 80126 Napoli,
Italia\\ 
e-mail: giannett@unina.it\\
and\\
Anna Zatorska-Goldstein \\ Institute of Applied Mathematics and Mechanics, University of Warsaw,\\ ul. Banacha 2, 02-097 Warsaw, Poland \\
e-mail: azator@mimuw.edu.pl}

\bigskip

\begin{abstract} 
We investigate solutions to  nonlinear elliptic Dirichlet problems of the type 
\[ 
\left\{\begin{array}{cl}
- \dv A(x,u,\nabla u)= \mu &\qquad \mathrm{ in}\qquad  \Omega,\\
u=0 &\qquad \mathrm{  on}\qquad \partial\Omega,
\end{array}\right.
\]  
where $\Omega$ is a bounded Lipschitz domain in $\rn$ and $A(x,z,\xi)$ is a Carath\'eodory's function. The growth of~the~monotone vector field $A$ with respect to the $(z,\xi)$ variables is expressed through some  $N$-functions  $B$ and $P$. We do not require any particular type of growth condition of such functions, so we deal with  problems in  nonreflexive spaces. When the problem involves measure data and weakly monotone operator, we prove existence. For $L^1$-data problems with strongly monotone operator we infer also uniqueness and regularity of~solutions and their gradients in the scale of Orlicz-Marcinkiewicz spaces.
\end{abstract}

\begin{keyword}
existence, measure-data problems, regularity, Orlicz-Sobolev spaces, Orlicz-Marcinkiewicz spaces

\end{keyword}
\end{frontmatter}
\section{Introduction}
The main aim of this paper is to present the study of boundary value problems for a class of nonlinear elliptic equations. More precisely, we consider elliptic operators whose nonlinearity is expressed through $N$-functions which do not need to satisfy any particular growth condition. Since admitted data are merely integrable or in the space of measures, in general they do not belong to natural dual space and we do not study energy solutions but the more delicate notion of solution.

So far the effort in the research on Dirichlet problems associated to nonlinear elliptic equations   concentrates mainly on the case when modular function has a growth comparable with a polynomial or trapped between two power-type functions. This includes the well understood case when both the modular function and its conjugate satisfy the so-called $\Delta_2$  (or doubling) condition {necessary for an Orlicz space to be reflexive. Example~\ref{ex:poly-non-D2} below indicates that $\Delta_2$--condition is stronger than requirement that the growth is trapped between two power-type functions. Otherwise, i.e. when a modular function grows too slowly, too fastly, or not regularly enough, the analytical difficulties appear and significantly restrict good properties of the underlying functional space. We avoid this kind of growth restrictions and thus work in the nonreflexive space. Although this case requires an approach alternative to the classical one}, we make an attempt to convince that the basic toolkit is small and easy to handle. 

\medskip

  For the foundations of nonlinear boundary value problems in~non-reflexive Orlicz-Sobolev-type setting we refer to Donaldson~\cite{Donaldson}, Gossez~\cite{Gossez2,Gossez}, and~\cite{MT} by Mustonen and Tienari. 
In particular, in ~\cite{Donaldson}, the coefficients are assumed coercive, monotone with respect to $u$ and its derivatives, and the $N$-functions controlling their growth have conjugates satisfying the $\Delta_2$ condition. In ~\cite{Gossez2,Gossez,MT}, the authors removed  or weakened previous assumptions. Nonetheless, these research was focused on energy solutions.

\medskip

In the present paper we consider   elliptic Dirichlet problems of the type 
\begin{equation} \label{intro:ell:f}
\left\{\begin{array}{cl}
- \dv A(x,u,\nabla u)= f &\qquad \mathrm{ in}\qquad  \Omega,\\
u=0 &\qquad \mathrm{  on}\qquad \partial\Omega,
\end{array}\right.
\end{equation} 
where the monotone operator $A(x,z,\xi)$ has a growth with respect to the $(z,\xi)$ variables performed by general   $N$-functions and where the right--hand side data is merely integrable and further also in the space of measures. 

\medskip

It is worth pointing out that if  the datum $f$ only belongs to $L^1(\Omega)$ or to the set of Radon measure with finite total variation on $\Omega$, ${\cal M}(\Omega)$, a special notion of solutions has to be considered.  Indeed belonging to the duals of the natural Orlicz-Sobolev energy spaces associated with problems \eqref{intro:ell:f} is the minimal assumption on $f$ for weak solutions to be well defined.  {Our idea will be  to get a solution $u$ that is the
limit of a sequence of weak solutions to problems whose right-hand sides converge to $f$. More precisely, following~\cite{CiMa}, we choose the notion of~approximable solutions somehow  combining the notion of solutions obtained as a limit of approximation (SOLA for short) and entropy solutions, see~\cite{dall, bgSOLA,bbggpv}. When the problem involves measure data and the operator is weakly monotone, we prove existence. For $L^1$-data problems with strongly monotone operator we infer also uniqueness and regularity in the Orlicz-Marcinkiewicz spaces.

\medskip

Elliptic differential equations with the right-hand side which is less regular than naturally belonging to the dual space to the one of the leading part of the operator, have received special attention and a few main ideas of relevant notions of solution, cf.~\cite[Section~3]{IC-pocket} and references therein. The key property we expect from this special notions of solutions is uniqueness, which is not shared by distributional solutions. The classical example of Serrin~\cite{Serrin-pat} is a linear homogeneous equation of the type
$\dv (A(x)Du) = 0$ defined on a ball, with strongly elliptic and bounded, measurable matrix $A(x)$, that has at least two distributional solutions, among which only  one belongs to the natural energy space $W^{1,2}(B)$. The problem of uniqueness of very weak solutions to measure-data equations is, to our best knowledge, an~open problem. 

\medskip

 There are at least three different and already classical approaches to this kind of problems keeping uniqueness even under weak assumptions on the data. The notion of renormalized solutions  appeared first in~\cite{diperna-lions}, whereas the entropy solutions comes from~\cite{bbggpv}. The SOLA were introduced in~\cite{bgSOLA,dall}. See also~\cite{FS,DMOP} for other classical results. Under certain restrictions the mentioned notions coincide,~\cite{Rak,KiKuTu}.   Following~\cite{CiMa}, we investigate the already mentioned \textit{approximable solutions}, which for $L^1$-data  are unique.
 
  On the other hand, regularity for $L^1$ or measure data is deeply investigated in the Sobolev setting, e.g.~\cite{DV,min07,min-math-ann}, but besides little is known in general Orlicz spaces, especially outside $\Delta_2$-family, where we want to contribute. To our best knowledge, gradient estimates provided to elliptic problems posed in the Orlicz setting are restricted to \cite{ACCZG,IC-grad,CiMa}. None of this results however concerns the class of operators  $A$ depending also on the solution itself, as we do here. 

\medskip

We underline we relax the growth restrictions of~\cite{CiMa} allowing to study spaces equipped with modular functions with $L\log L$ or  exponential growth. To obtain existence we need to by-pass tools working in~the reflexive spaces only, employing some ideas of~\cite{pgisazg1}  in the Musielak-Orlicz setting.  The powerful tool we use and find particularly useful is the modular approximation in the classical Orlicz version of Gossez~\cite{Gossez} (see Definition~\ref{def:mod-conv} and Theorem~\ref{theo:approx}) recently adapted to the Musielak-Orlicz case in~\cite{yags}. 

To~establish regularity results we need to apply the embeddings of Orlicz-Sobolev spaces into some Orlicz space, see Section~\ref{sec:emb}.  As a tool we provide   inequalities of~ modular Sobolev-Poincar\'e-type and  Poincar\'e-type, holding with a modular function of arbitrary growth, see Proposition~\ref{prop:Sob-Poi} and Corollary ~\ref{prop:Poincare}, respectively. Once these inequalities are available, we are able to obtain two types of level sets estimates giving regularity properties for the solutions in the scale of Orlicz-Marcinkiewicz spaces.

Since many parts of~our framework (in particular the approximation method) require $\Omega$ to have a regular boundary,  we present all of~the results on a bounded Lipschitz domain.

\section{Statements of main results}
For brevity we skip listing here full notation, presented in detail in Section 3.

\medskip

 {Let  $\Omega\subset\rn$, $N\geq 1$, be a bounded Lipschitz domain and a function $A: \Omega \times \R \times \rn\to\rn$.} We shall consider the following set of assumptions.
\begin{itemize}
\item[(A1)] $A(x,z,\xi)$ is a Carath\'eodory's function, i.e. measurable w.r.t. to $x$ and continuous w.r.t. $z$, as well as w.r.t. $\xi$;
\item[(A2)]  for a.e. $x\in \Omega$ and for all $(z,\xi)\in\r\times\rn$, the following growth conditions hold
\begin{equation}\label{bound}
d_0 B(|\xi|)\leq A(x,z,\xi)\cdot \xi,
\end{equation}
\begin{equation} \label{upper bound}
|A(x,z,\xi)| \leq \frac{1}{3d}\left[ \wt{B}^{-1}(B(|\xi|)) + \wt{P}^{-1}(B(z)) +K(x)\right],
\end{equation}
where   $B:\rp\to\rp$ and  $P:\rp\to\rp$ are two $N$-functions such that 
$P < < B$, $\widetilde{B}$ is the conjugate  of $B$, $\widetilde{B}^{-1}$ is the inverse of $\widetilde{B}$
and $K(x)$ is a function belonging to ${E}_{\wt{B}}(\Omega)$, the closure of $L^{\infty}$ in the $L_{\wt{ B}}$-norm.
\item[(A3)$_w$]  $A(x,z,\xi)$ is  monotone in the last variable, i.e.
\[(A(x,z,\xi) - A(x,z, \eta)) \cdot (\xi-\eta)\ge 0 \]
for a.e  $x\in\Omega$, for every $z\in\r$ and all $\xi,\eta\in\rn$;
\item[(A3)$_s$]  $A(x,z,\xi)$ is strictly monotone in the last variable, i.e.
\[(A(x,z,\xi) - A(x,z, \eta)) \cdot (\xi-\eta)> 0 \]
for a.e  $x\in\Omega$, for every $z\in\r$ and all $\xi\neq\eta\in\rn$;
\item[(A4)]  for a.e $x\in\Omega$ and for $z\in\r$, it holds
\[A(x,z,0)=0 \]
\end{itemize}
Note that conditions (A1)--(A3) are generalizations of the classical Leray-Lions conditions to the Orlicz-Sobolev space setting.

\medskip

We consider the problem
\begin{equation}\label{eq:main:mu}
\left\{\begin{array}{cl}
-\dv A(x,u,\nabla u)= \mu &\qquad \mathrm{ in}\qquad  \Omega,\\
u(x)=0 &\qquad \mathrm{  on}\qquad \partial\Omega,
\end{array}\right.
\end{equation} 
where $\mu\in \mathcal{M}(\Omega)$ is a Radon measure with bounded total variation $|\mu|(\Omega)<\infty$ or $\mu$ is replaced by $f\in L^1(\Omega)$.

\bigskip

To define the solution we need to recall  the truncation  $T_k(u)$  defined as 
\begin{equation}T_k(u)=\left\{\begin{array}{ll}u & |u|\leq k,\\
k\frac{u}{|u|}& |u|\geq k,
\end{array}\right. \label{Tk}
\end{equation}
 {and   the following notation }
\begin{equation}
\label{sp-tr}
{\cal T}^{1,B}(\Omega)=\{u\text{ is measurable in }\Omega :\ T_t(u)\in W^{1,B}(\Omega)\text{ for every }t>0\}.
\end{equation}
The Orlicz-Sobolev space $W^{1,B}$ is defined in Section~\ref{sec:prel}.

\bigskip 

\begin{definition}\label{def:as:mu}  A function $u\in{\cal T}^{1,B}(\Omega)$  is called an \textbf{approximable solution} to the Dirichlet problem~\eqref{eq:main:mu} with given $\mu\in \mathcal{M}(\Omega)$, if there exist a sequence $\{f_k \}_k\subset L^1 (\Omega) $ such that $f_k\xrightharpoonup{*} \mu$ weakly-* in the space of measures, namely that  it holds
\begin{equation}
\label{eq:def:weak-meas-conv}
\lim_{k\to\infty}\int_\Omega\vp\,f_k\,dx=\int_\Omega\vp\,d\mu
\end{equation}
for every $\vp\in C_c(\Omega)$ and a sequence of~weak solutions $\{u_k\}_k\subset W^{1,B}_0 (\Omega)$ to problem~\eqref{eq:main:mu} with $\mu$ replaced by $f_k$, satisfying $u_k\to u$  {a.e. in }$\Omega$. 
\end{definition}

\begin{definition}\label{def:as:f} A  function $u\in {\cal T}^{1,B}(\Omega)$
 is called an \textbf{approximable solution} to the Dirichlet problem~\eqref{eq:main:mu} with $\mu$ replaced by $f\in L^1(\Omega)$, if there exist a sequence  $\{f_k \}_k\subset L^1 (\Omega)\cap (W^{1,B}_0(\Omega))^\prime$   such that $f_k\to f$ in $L^1 (\Omega)$ and a sequence of~weak solutions $\{u_k\}_k\subset W^{1,B}_0 (\Omega)$ to problem~\eqref{eq:main:mu} with $\mu$ replaced by $f_k$, satisfying $u_k\to u$ a.e. in $\Omega$. 
\end{definition}

{ It may happen that an approximable solution is not weakly differentiable. However, it is associated with a vector-valued function on $\Omega$ playing the role of  its gradient on every level of truncation and therefore,  with some abuse of notation, we will still use the symbol $\nabla u$. More details on this issue can be found in Section~\ref{sec:prel}.

\bigskip

Our main results state as follows.

\begin{theorem}\label{theo:main-mu} 
 {Consider a} measure $\mu\in \mathcal{M}(\Omega)$ and a function $A: \Omega \times \R \times \rn\to\rn$ satisfying assumptions (A1), (A2), (A3)$_w$ and (A4). Then there exists an approximable solution $u\in{\cal T}^{1,B}(\Omega)$ to the problem
\eqref{eq:main:mu} and moreover
\begin{equation}\label{A-conv}
A(u,T_t u_k ,\nabla T_{t}(u_k)))\xrightharpoonup[k\to\infty]{*} A(x,T_t u ,\nabla T_tu)\qquad \text{in }\ L_{\widetilde{B}}.
\end{equation}
\end{theorem}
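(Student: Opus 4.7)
The plan is the classical approximation-compactness-monotonicity scheme in the spirit of \cite{CiMa, pgisazg1}, with the nonreflexivity of $L_B$ bypassed via Gossez's modular convergence. First I would mollify $\mu$ to obtain a sequence $f_k\in C_c^\infty(\Omega)\subset L^1(\Omega)\cap (W^{1,B}_0(\Omega))^\prime$ with $f_k\xrightharpoonup{*}\mu$ in ${\cal M}(\Omega)$ and $\|f_k\|_{L^1(\Omega)}\le|\mu|(\Omega)$. For each such $f_k$ a weak solution $u_k\in W^{1,B}_0(\Omega)$ to the regularized problem exists by the Leray-Lions-Gossez-Mustonen-Tienari existence theory under (A1)--(A4). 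Testing with $T_t(u_k)$, which is admissible since $u_k\in W^{1,B}_0(\Omega)$ and $T_t$ is Lipschitz, and using \eqref{bound} together with (A4), would give the fundamental a priori bound
\[
d_0\int_\Omega B(|\nabla T_t u_k|)\,dx \le \int_\Omega f_k\, T_t(u_k)\,dx \le t\,|\mu|(\Omega),
\]
uniformly in $k$.

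Combined with the modular Sobolev-Poincar\'e inequality from Proposition~\ref{prop:Sob-Poi}, this a priori estimate produces uniform level-set bounds on $|\{|u_k|>s\}|$ and on $|\{|\nabla u_k|>s\}|$, which, via arguments \'a la Boccardo-Gallou\"et, supply compactness of $(u_k)_k$ in measure. Extracting a subsequence delivers $u_k\to u$ a.e. in $\Omega$ with $u\in{\cal T}^{1,B}(\Omega)$, $T_t u_k\to T_t u$ a.e., and $\nabla T_t u_k$ bounded in $L_B(\Omega)$. Moreover, the upper bound~\eqref{upper bound}, the $L^\infty$-bound $|T_t u_k|\le t$, and Young's inequality applied to $\widetilde{B}^{-1}\!\circ\! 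B$ yield a uniform bound of $A(x, T_t u_k, \nabla T_t u_k)$ in $L_{\widetilde{B}}(\Omega)$; Banach-Alaoglu in the duality $L_{\widetilde{B}}=(E_B)^\prime$ then supplies a weak-$*$ convergent subsequence with some limit $\overline{A}_t$.

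To identify $\overline{A}_t=A(x,T_t u,\nabla T_t u)$ — whence~\eqref{A-conv} — I would run Minty's monotonicity trick in modular form. The weak monotonicity (A3)$_w$ gives
\[
\int_\Omega\bigl(A(x,T_t u_k,\nabla T_t u_k)-A(x,T_t u_k,\nabla\psi)\bigr)\cdot\bigl(\nabla T_t u_k-\nabla\psi\bigr)\,dx\ge 0
\]
for any $\psi\in W^{1,B}_0(\Omega)\cap L^\infty(\Omega)$ with $\|\psi\|_\infty\le t$. Approximating $T_t u$ by such $\psi=\psi_\delta$ modularly in $W^{1,B}_0(\Omega)$ via Gossez's Theorem~\ref{theo:approx}, passing to the limit first in $k$ and then in $\delta\to 0$, and exploiting the arbitrariness of $\psi$ delivers $\overline{A}_t=A(x,T_t u,\nabla T_t u)$ and hence~\eqref{A-conv}.

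The main obstacle is precisely this Minty step: the product $A(x,T_t u_k,\nabla T_t u_k)\cdot\nabla T_t u_k$ pairs a weak-$*$ convergent sequence with only a weakly convergent one, and since merely weak monotonicity (A3)$_w$ is assumed one cannot fall back on a Boccardo-Murat a.e.\ gradient convergence argument (which would require strict monotonicity). Here Gossez's modular convergence (Definition~\ref{def:mod-conv} and Theorem~\ref{theo:approx}) replaces the missing reflexive weak compactness, providing test functions $\psi_\delta$ with $\nabla\psi_\delta\to\nabla T_t u$ modularly — exactly the regularity needed to pass to the limit in the Minty inequality without any $\Delta_2$ hypothesis. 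A diagonal extraction in $k, t, \delta$ then completes the construction of the approximable solution.
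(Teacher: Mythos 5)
Your roadmap is right and matches the paper's structure: mollify $\mu$ (Step 5), invoke Mustonen--Tienari for the regularized problems (Step 1), test with $T_t u_k$ to get the modular a priori bound (Step 2), pass to a.e.\ convergence of $u_k$ by the level-set/measure argument (Step 3), and identify the weak-$*$ limit of $A(x,T_t u_k,\nabla T_t u_k)$ via Minty's trick in the non-reflexive setting with Gossez's modular approximation standing in for reflexive compactness (Step 4). You also correctly diagnose where the difficulty lies: the pairing $\int_\Omega A(x,T_t u_k,\nabla T_t u_k)\cdot \nabla T_t u_k\,dx$ couples two sequences that converge only weak-$*$.

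The gap is that you do not actually close this pairing; saying you will ``pass to the limit first in $k$ and then in $\delta\to 0$'' in the Minty inequality presupposes the energy inequality
$$
\limsup_{k\to\infty}\int_\Omega A(x,T_t u_k,\nabla T_t u_k)\cdot\nabla T_t u_k\,dx
\;\le\;
\int_\Omega \cA_t\cdot\nabla T_t u\,dx,
$$
and that is precisely what has to be proved. Testing the $u_k$-equation with $T_t u_k-(T_t u)_\delta$ does not deliver it, because the equation sees $A(x,u_k,\nabla u_k)$, which differs from $A(x,T_t u_k,\nabla T_t u_k)$ on $\{|u_k|\ge t\}$, where $\nabla(T_t u)_\delta$ need not vanish. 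The paper's way around this is the cutoff function $\psi_l$ of \eqref{psil}: insert it (using (A4) to see that $A(x,T_t u_k,\nabla T_t u_k)\psi_l(u_k)=A(x,T_t u_k,\nabla T_t u_k)$ for $l>t$), test with $\psi_l(u_k)\bigl(T_t u_k-(T_t u)_\delta\bigr)\in W^{1,B}_0(\Omega)$, and split into the three integrals $I_1,I_2,I_3$. Each then vanishes in the iterated limits $k\to\infty$, $\delta\to 0$, $l\to\infty$: $I_1$ by the weak formulation and dominated convergence; $I_2$ by the equation on the annulus $\{l<|u_k|<l+1\}$ together with equi-integrability of $f_k$; $I_3$ by the weak-$*$ convergence \eqref{a-conv-ca} together with Lemmas~\ref{lem:TM1} and \ref{lem:ae}. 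This decomposition (and the role of (A4) in it) is the heart of Step 4, and it is absent from your proposal; a vague appeal to modular approximation plus ``a diagonal extraction'' does not substitute for it. Separately, minor points: you should verify that $A(x,T_t u_k,\eta)\to A(x,T_t u,\eta)$ strongly in $E_{\widetilde{B}}$ (the paper does this through (A2), de la Vall\'ee Poussin and Lemma~\ref{lem:modular-norm}) before invoking Lemma~\ref{lem:weak-strong-conv}; and for the a.e.\ compactness of $u_k$ you only need the level-set bound on $|\{|u_k|>l\}|$, not on the gradient level sets, which enter later in the regularity section.
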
 

\begin{theorem}\label{theo:main-f}
Assume  $f\in L^1(\Omega)$ and  $A: \Omega \times \R \times \rn\to\rn$ a function satisfying assumptions (A1), (A2), (A3)$_s$ and (A4). Then there exists a \textbf{unique} approximable solution $u\in{\cal T}^{1,B}(\Omega)$ to the problem~\eqref{eq:main:mu} with $\mu$ replaced by $f$ and~\eqref{A-conv} holds. 
\end{theorem}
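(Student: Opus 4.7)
Since (A3)$_s$ implies (A3)$_w$, Theorem~\ref{theo:main-mu} already yields an approximable solution $u\in\mathcal{T}^{1,B}(\Omega)$ satisfying~\eqref{A-conv}, interpreted through the $L^1$ datum $\mu=f\,dx$. To fit the stronger Definition~\ref{def:as:f}, one specialises the approximating sequence to the truncation $f_n:=T_n(f)$, which lies in $L^{\infty}(\Omega)\cap L^1(\Omega)\subset (W^{1,B}_0(\Omega))'$ and converges to $f$ strongly in $L^1(\Omega)$ by dominated convergence. The associated weak solutions $u_n\in W^{1,B}_0(\Omega)$ (delivered by the Orlicz version of Leray--Lions) converge a.e.\ to $u$ along a subsequence by the same compactness machinery used in the proof of Theorem~\ref{theo:main-mu}.

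\textbf{Uniqueness} is the substantive new content and relies crucially on the strict monotonicity (A3)$_s$. Let $u,v$ be two approximable solutions with approximating data $(f_n),(g_n)\to f$ in $L^1(\Omega)$ and corresponding weak solutions $u_n,v_n\in W^{1,B}_0(\Omega)$. For fixed $h>0$, I would test the difference of the two weak formulations with $T_h(u_n-v_n)\in W^{1,B}_0(\Omega)\cap L^{\infty}(\Omega)$, admissible since $T_h$ is $1$-Lipschitz with $T_h(0)=0$, to obtain
\[
\int_\Omega \bigl[A(x,u_n,\nabla u_n) - A(x,v_n,\nabla v_n)\bigr]\cdot\nabla T_h(u_n-v_n)\,dx = \int_\Omega (f_n-g_n)\,T_h(u_n-v_n)\,dx,
\]
whose right-hand side is bounded by $h\|f_n-g_n\|_{L^1(\Omega)}\to 0$. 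Splitting the integrand on the left into the monotone piece
\[
\bigl[A(x,u_n,\nabla u_n)-A(x,u_n,\nabla v_n)\bigr]\cdot\nabla(u_n-v_n)\,\mathbf{1}_{\{|u_n-v_n|<h\}}\ge 0
\]
(by (A3)$_w$) and a remainder $\bigl[A(x,u_n,\nabla v_n)-A(x,v_n,\nabla v_n)\bigr]\cdot\nabla(u_n-v_n)\,\mathbf{1}_{\{|u_n-v_n|<h\}}$ capturing the $z$-dependence, one passes to the limit using the a.e.\ convergences $u_n\to u$, $v_n\to v$, the weak-$*$ convergence~\eqref{A-conv} on every truncation level, continuity of $A(x,\cdot,\xi)$, and the Gossez modular approximation (Definition~\ref{def:mod-conv}, Theorem~\ref{theo:approx}) to identify the limits of the truncated gradients. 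In the limit,
\[
\int_{\{|u-v|<h\}} \bigl[A(x,u,\nabla u)-A(x,u,\nabla v)\bigr]\cdot\nabla(u-v)\,dx \le 0,
\]
which by strict monotonicity (A3)$_s$ forces $\nabla u=\nabla v$ a.e.\ on $\{|u-v|<h\}$. Sending $h\to\infty$ gives $\nabla u=\nabla v$ a.e.\ in $\Omega$, and since $T_k u-T_k v\in W^{1,B}_0(\Omega)$ has vanishing gradient for every $k$, one obtains $T_k u=T_k v$ for all $k$, hence $u=v$ a.e.

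\textbf{Main obstacle.} The decisive technical point is the passage to the limit in the remainder term, since $u_n,v_n$ are not uniformly bounded in $L^{\infty}$. The standard remedy is to replace $T_h(u_n-v_n)$ by $T_h(T_Mu_n-T_Mv_n)$ with an auxiliary truncation level $M$, identify the limit at fixed $M$ using modular convergence of the truncated gradients, and then send $M\to\infty$, controlling the residual through the a priori estimate on $B(|\nabla T_h(u_n-v_n)|)$ together with the equi-integrability of the coefficients guaranteed by (A2). Because the space is nonreflexive, ordinary weak compactness in the energy norm is unavailable, so the whole argument leans on the modular approximation of Gossez~\cite{Gossez}, which is precisely the technical engine the paper has set up and which distinguishes this proof from its classical $W^{1,p}$ counterpart.
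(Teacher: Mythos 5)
Your existence part and the first half of the uniqueness argument are on the right track and essentially parallel the paper's Step 5 and the opening of the uniqueness proof (test with a truncation of the difference of the approximating solutions, let $n\to\infty$, invoke strict monotonicity to get $\nabla u=\nabla v$ a.e.). However, the final step of your uniqueness argument contains a genuine gap. You write that ``since $T_k u - T_k v \in W^{1,B}_0(\Omega)$ has vanishing gradient for every $k$, one obtains $T_k u = T_k v$.'' This does not follow from $\nabla u=\nabla v$ a.e. For functions in ${\cal T}^{1,B}(\Omega)$ the gradient of the truncation is $\nabla T_k u = \nabla u\,\mathds{1}_{\{|u|<k\}}$ and $\nabla T_k v = \nabla v\,\mathds{1}_{\{|v|<k\}}$, so even with $\nabla u=\nabla v$ one has
\[
\nabla(T_k u - T_k v) = \nabla u\,\bigl(\mathds{1}_{\{|u|<k\}}-\mathds{1}_{\{|v|<k\}}\bigr),
\]
which is not identically zero unless the truncation level sets coincide --- which is exactly what is being proved. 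In other words, equality of the generalized gradients does not by itself force equality of the functions in this truncation framework.

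The paper's proof closes this gap with an additional, non-trivial argument: after establishing $\nabla u=\nabla \bu$ a.e., it applies the modular Poincar\'e inequality (Corollary~\ref{prop:Poincare}) to the test object $T_r(u - T_t(\bu))$, whose generalized gradient is supported where $|\bu|\ge t$, and then uses the weak formulations together with the a priori level-set estimate to bound the resulting right-hand side by $c\int_{\{t-r<|u|\}}|f|\,dx$, which tends to $0$ as $t\to\infty$; Fatou's lemma then yields $\int_\Omega B(c_1|T_r(u-\bu)|)\,dx=0$ for every $r>0$, hence $u=\bu$ a.e. You would need to supply an argument of this type (or an equivalent one) to pass from $\nabla u=\nabla v$ to $u=v$; as written, that step fails. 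As a secondary remark: you correctly identify the passage to the limit in the remainder (the $z$-dependence) term as the decisive obstacle but leave it as a sketch; the paper is also terse there, invoking Step 4, (A3)$_s$ and Fatou, so this is a shared softness rather than a defect particular to your proposal.
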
 

Uniqueness in this context means that the solution does not depend on the choice of approximate problems. Consequently, for the problem with regular data the unique approximable solution agrees with the weak solution, which is trivially also an approximable
solution.

\medskip

  As announced in the Introduction, we shall also obtain some regularity results for  the solution and its gradient. For their statements and  proofs we refer to Section~\ref{sec:reg} since they can be deduced by propositions which are interesting by themselves.

\section{Preliminaries}\label{sec:prel}

\subsection{Notation and basic lemmas }
 Throughout the paper $\Omega$ is a bounded Lipschitz domain in $\rn$, $N\ge1$. We shall use the notation $|\cdot|$ for  the absolute value, as well as for the norm in $\rn$ (for gradient norm)
and denote by $\mathds{1}_{A}$  the characteristic function of a set $A$. 

\medskip

\noindent  Let us start with  two useful results. 
\begin{lemma}[e.g. Lemma~9.1, \cite{GIOV}] \label{lem:ae}
If $g_n:\Omega\to \R$ are measurable functions converging 
to $g$ almost everywhere, then for each regular value $t$ 
of the limit function $g$ we have $\mathds{1}_{\{t<|g_n|\}}\xrightarrow[n\to\infty]{}\mathds{1}_{\{t<|g|\}}$ a.e. in $\Omega$.
\end{lemma}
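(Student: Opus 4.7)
The plan is to exploit the fact that for a regular value $t$ of $g$, by definition the level set $\{|g|=t\}$ has Lebesgue measure zero. Combined with the null set where $g_n$ fails to converge to $g$, this gives a full measure set on which pointwise convergence of the indicators can be verified directly.

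First I would fix the exceptional set $E=\{x\in\Omega:\, g_n(x)\not\to g(x)\}\cup\{x\in\Omega:\,|g(x)|=t\}$, which has measure zero since both pieces do. It then suffices to prove pointwise convergence of $\mathds{1}_{\{t<|g_n|\}}(x)$ to $\mathds{1}_{\{t<|g|\}}(x)$ for every $x\in\Omega\setminus E$.

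Next I would split into the two remaining cases. If $|g(x)|>t$, the continuity of $|\cdot|$ together with $g_n(x)\to g(x)$ yields $|g_n(x)|\to |g(x)|>t$, so there is $n_0$ such that $|g_n(x)|>t$ for all $n\ge n_0$; hence the indicators are eventually both equal to $1$. If instead $|g(x)|<t$, the same argument gives $|g_n(x)|<t$ eventually, so both indicators are eventually $0$. In either case the pointwise limit equals $\mathds{1}_{\{t<|g|\}}(x)$, which finishes the proof.

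There is essentially no obstacle here; the statement is a direct consequence of the definition of a regular value. The only subtlety worth flagging in the exposition is the role of the assumption $|\{|g|=t\}|=0$, which is exactly what prevents the pathological borderline behaviour $|g(x)|=t$ (where $|g_n(x)|$ could oscillate across the threshold $t$ infinitely often) from occurring on a set of positive measure.
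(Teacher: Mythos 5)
Your proof is correct and is the standard direct argument for this statement; since the paper only cites this lemma (to Lemma~9.1 of \cite{GIOV}) without reproducing a proof, there is nothing to compare against in the paper itself. Your argument — discard the null set where $g_n\not\to g$ together with the null level set $\{|g|=t\}$, then use pointwise convergence of $|g_n(x)|$ to conclude the indicators are eventually constant at the correct value — is exactly the expected proof, and your remark about why the borderline case $|g(x)|=t$ must be excluded correctly identifies the role of the regular-value hypothesis.
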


\begin{lemma} \label{lem:TM1}
 Suppose $w_n\xrightharpoonup[n\to\infty]{}w$ in $L^1(\Omega)$, $v_n,v\in L^\infty(\Omega)$, and $v_n\xrightarrow[n\to\infty]{a.e.}v$. Then \[\int_\Omega w_n v_n\,dx \xrightarrow[n\to\infty]{}\int_\Omega w v\,dx.\]
\end{lemma}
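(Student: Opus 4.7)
The plan is to decompose the difference into two pieces that can be handled by the two distinct hypotheses, namely
\[
\int_\Omega w_n v_n\,dx - \int_\Omega w v\,dx = \int_\Omega (w_n-w)\,v\,dx + \int_\Omega w_n\,(v_n-v)\,dx.
\]
Since $v\in L^\infty(\Omega)\subset (L^1(\Omega))'$, the first integral tends to $0$ directly from the definition of weak convergence of $\{w_n\}$ in $L^1(\Omega)$. The whole issue therefore reduces to showing that $\int_\Omega w_n(v_n-v)\,dx\to 0$, where the difficulty is that we only know a.e.\ (not $L^\infty$) convergence of $v_n$ to $v$.

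To handle the second piece, I would invoke the Dunford--Pettis theorem: weak convergence of $\{w_n\}$ in $L^1(\Omega)$ yields equi-integrability, so for every $\varepsilon>0$ one can find $\delta>0$ such that $\sup_n\int_E |w_n|\,dx<\varepsilon$ whenever $|E|<\delta$. In addition, weak convergence implies $\sup_n \|w_n\|_{L^1}<\infty$ by the Banach--Steinhaus theorem. Combined with the a.e.\ convergence $v_n\to v$, Egorov's theorem gives a measurable set $E\subset\Omega$ with $|E|<\delta$ such that $v_n\to v$ uniformly on $\Omega\setminus E$.

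Splitting the integral along $E$ and $\Omega\setminus E$, on the good set I bound
\[
\Bigl|\int_{\Omega\setminus E} w_n(v_n-v)\,dx\Bigr|\le \sup_{\Omega\setminus E}|v_n-v|\cdot \sup_n\|w_n\|_{L^1},
\]
which tends to $0$ by the uniform convergence. On $E$ I use the hypothesis $v_n,v\in L^\infty(\Omega)$ (understood as uniformly bounded, say by $M$) together with equi-integrability to get
\[
\Bigl|\int_{E} w_n(v_n-v)\,dx\Bigr|\le 2M\int_E|w_n|\,dx\le 2M\varepsilon.
\]
Since $\varepsilon>0$ is arbitrary, the second piece vanishes in the limit, concluding the proof.

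The main obstacle is the second term: a.e.\ convergence of $v_n$ alone is not strong enough to pass to the limit under a weakly convergent family in $L^1$, so the argument must lean on Dunford--Pettis/Egorov together with a uniform $L^\infty$ bound on $\{v_n\}$. If no such uniform bound is implicit in the hypothesis ``$v_n,v\in L^\infty(\Omega)$'', the statement fails in general and one would need to add $\sup_n\|v_n\|_\infty<\infty$; in the sequel I expect the lemma to be applied precisely in that regime (typically with $v_n=\mathbf{1}_{\{|u_n|<t\}}$ or similar bounded quantities arising from truncations).
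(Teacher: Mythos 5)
The paper states this lemma without proof, so there is no authorial argument to compare against; your proof therefore stands or falls on its own. Your decomposition is the natural one, the first term is handled correctly by duality $L^\infty(\Omega)=(L^1(\Omega))'$, and the combination of Dunford--Pettis (equi-integrability of a weakly convergent $L^1$ sequence), Banach--Steinhaus (uniform $L^1$ bound), and Egorov is the standard and correct way to dispose of the second term. Your caveat about needing $\sup_n\|v_n\|_{L^\infty(\Omega)}<\infty$ is not a stylistic quibble but a genuine gap in the statement as written: take $\Omega=(0,1)$, $w_n\equiv 1$ (so $w_n\rightharpoonup w\equiv 1$ trivially), $v\equiv 0$, and $v_n=n^2\,\mathds{1}_{(0,1/n^2)}$. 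Then each $v_n\in L^\infty(\Omega)$ and $v_n\to 0$ a.e., yet $\int_\Omega w_n v_n\,dx=1\not\to 0$. So the lemma is false if ``$v_n,v\in L^\infty(\Omega)$'' is read literally, and the intended hypothesis is a uniform bound, exactly as you anticipate. This is also how the lemma is actually invoked in the paper: in Step~4 of the proof of Theorem~\ref{theo:main-mu} the role of $v_k$ is played by $\psi_l(u_k)\,\mathds{1}_{\{t<|u_k|<l+1\}}$, which is bounded by $1$ uniformly in $k$. In short, your proof is correct under the (necessary) additional hypothesis, and your diagnosis of why that hypothesis is needed is exactly right.
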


\subsection{The Orlicz setting}

We refer the interested reader to ~\cite{rao-ren} for an exhaustive treatment of the theory of  Orlicz spaces and to~\cite{adams-fournier} for compact, though capturing the point, description of the necessary properties of the Orlicz-Sobolev spaces.

Recall that a function   $B: \rp \to\rp$ is called an $N$-function if  $B$ is a strictly increasing convex function satisfying \[\lim_{r\to 0} \frac{B(r)}{r}=0\qquad\text{and}\qquad \lim_{r\to \infty} \frac{B(r)}{r}=\infty.\]

\noindent Its conjugate function $\widetilde{B}:\rp\to\rp$  is defined by
\begin{equation}\label{def:conj}
\widetilde{B}(s)=\sup_{t>0 }(t\cdot s-B(t))
\end{equation}
and is an $N$-function as well.  

 {Given two $N$-functions $P$ and $B$, we shall write $P<<B$ in order to mean that for each $ \varepsilon>0$ , $P(t)/B(\varepsilon t)\to 0$ as $t\to \infty$. }

 {Observe that one has $P<<B$ if and only if $\widetilde{B}<<\widetilde{P}$, see~\cite{Gossez2}.}

\begin{definition}[The function spaces]\label{def:Osp} Let $B$ be an $N$-function.  We deal with the three  Orlicz classes of functions.  \begin{itemize}
\item[i)]${\cal L}_B(\Omega)$  - the generalised Orlicz class is the set of all measurable functions $\xi$ defined on $\Omega$ such that
\[\int_\Omega B(| \xi(x)|)\,dx<\infty.\]
\item[ii)]${L}_B(\Omega)$  - the generalised Orlicz space is the smallest linear space containing ${\cal L}_B(\Omega )$, equipped with the Luxemburg norm 
\[||\xi||_{L_B}=\inf\left\{\lambda>0:\int_\Omega B\left( \frac{|\xi(x)|}{\lambda}\right)\,dx\leq 1\right\}.\]
\item[iii)] ${E}_B(\Omega)$  - the closure in $L_B$-norm of the set of bounded functions.
\end{itemize}
\end{definition}
Then 
\[{E}_B(\Omega)\subset {\cal L}_B(\Omega)\subset { L}_B(\Omega)\]
and without growth restrictions the inclusions can be proper.

\begin{remark} If $B$ is an $N$-function and $\widetilde{B}$ its conjugate, we have\begin{itemize}
\item the Fenchel-Young inequality \begin{equation}
\label{inq:F-Y}|\xi\cdot\eta|\leq B(|\xi|)+\widetilde{B}(|\eta|)\qquad \mathrm{for\ all\ }\xi,\eta\in\rn.
\end{equation}
\item the generalized H\"older's inequality \begin{equation}
\label{inq:Holder}
\left|\int_{\Omega} \xi\cdot\eta\,dx\right|\leq 2\|\xi\|_{L_B }\|\eta\|_{L_{\widetilde{B}} }\quad \mathrm{for\ all\ }\xi\in L_B(\Omega),\eta\in L_{\widetilde{B}}(\Omega ).
\end{equation}
\end{itemize}
\end{remark}

Moreover, we shall consider the Orlicz-Sobolev space $W^{1,B}(\Omega)$ defined as follows
\begin{equation*} 
W^{1,B}(\Omega)=\big\{u\in W^{1,1}(\Omega): u,\nabla u\in L_B(\Omega)\big\},
\end{equation*}
where $ \nabla$ denotes the distributional gradient. The space  $W^{1,B}(\Omega)$ is 
endowed with the Luxemburg norm
\begin{equation}\label{eq8}
\|u\|_{W^{1,B}(\Omega)}=\inf\bigg\{\lambda>0 :   \int_\Omega B\bigg( \frac{|  u|}{\lambda}\bigg)dx+\int_\Omega B\bigg( \frac{|\nabla u|}{\lambda}\bigg)dx\leq 1\bigg\}.
\end{equation}
The space $W^{1,B}_0(\Omega)$ is defined as a closure of smooth functions, see~\eqref{W0} below.

If $B$ is an $N$-function, then $\big(W^{1,B}(\Omega), \|u\|_{W^{1,B}(\Omega)}\big)$ is a Banach space.

The space ${E}_B(\Omega)$ is separable and due to \cite[Theorem~8.19]{adams-fournier} we have the duality \[({E}_B(\Omega))'=L_{\widetilde{B}}(\Omega).\]

Recall the space ${\cal T}^{1,B}(\Omega)$ defined in~\eqref{sp-tr}. For every $u\in {\cal T}^{1,B}(\Omega)$ there exists a unique measurable function $Z_u:\Omega\to\rn$ such that\begin{equation}
\label{def:Zu}\nabla (T_t(u))=\mathds{1}_{\{|u|<t\}}Z_u\quad\text{a.e. in } \Omega, \text{ for every }{t>0},
\end{equation}
see \cite[Lemma~2.1]{bbggpv}. Since 
\[u\in W^{1,B}(\Omega)\iff u\in {\cal T}^{1,B}(\Omega)\cap L_B(\Omega)\text{ and } |Z_u|\in {\cal T}^{1,B}(\Omega),\]
for such $u$, we have $Z_u=\nabla u$ a.e. in $\Omega$. Thus,  we call $Z_u$ the generalized gradient of $u$ and, abusing the notation, for $u\in {\cal T}^{1,B}(\Omega)$ we write simply $\nabla u$ instead of $Z_u$.

\bigskip

For the spaces $E_B$ and $L_B$ to coincide, and consequently for their reflexiveness, one has to impose $\Delta_2$-condition on $B$ close to infinity (denoted $\Delta_2^\infty$) . Namely, it has to be assumed that there exists a constant $c_{\Delta_2}>0$ such that
\begin{equation}
\label{D2} B(2s)\leq c_{\Delta_2}B(s)\qquad\text{for }\ s>s_0.
\end{equation}

The spaces equipped with the modular functions satisfying $\Delta_2$-condition close to infinity have strong properties. In particular, we have
 \[E_{\widetilde{B}}  \xlongequal[]{\widetilde{B}\in\Delta^\infty_2}L_{\widetilde{B}}\xlongequal[ ]{ }(E_{B})'\xlongequal[]{B \in\Delta^\infty_2} (L_B)'.\]
Moreover, when $B\in\Delta^\infty_2$, then modular and strong convergence coincide. 
 
\medskip

\noindent We would like to stress that we face the problem \textbf{without} this structure. This allows us to deal with a broader class of modular functions. 
Let us discuss the typical assumption of $\Delta_2$-condition, which we do not impose. 

It describes the speed and the regularity of the growth of the function. For example, when we take $B(s) = (1+|s|)\log(1+|s|)-|s|$, its conjugate function is given by $\widetilde{B}(s) = \exp(|s|)-|s|-1$. Then $B\in\Delta_2$ and $\widetilde{B}\not\in\Delta_2$.

 We point out that despite the typical condition \begin{equation}
\label{iBsB}  1<i_B=\inf_{t>0}\frac{tB'(t)}{B(t)}\leq \sup_{t>0}\frac{tB'(t)}{B(t)}=  s_B<\infty
\end{equation} is often treated as equivalent to $B,\wt{B}\in\Delta_2$, as well as to comparison with power-type functions. Nonetheless, the assumption~\eqref{iBsB} is more restrictive, as it requires both regularity of the growth and restricts its speed. Indeed, if $i_B>1$ then $B\in\Delta_2$, whereas $s_B<\infty$ entails the $\Delta_2$-condition imposed on $\wt{B}$. When $B$ satisfies~\eqref{iBsB}, then
\[
\frac{B(t)}{t^{i_B}}\quad\text{is non-decreasing}\qquad\text{and}\qquad\frac{B(t)}{t^{s_B}}\quad\text{is non-increasing}.
\]Moreover, $B(s)=(1+|s|)\log(1+|s|)\in\Delta_2,$ but $i_B=1$. On the other hand, the following example  shows that comparison with two power-type functions is not enough for $\Delta_2$-condition. Another construction can be found in~\cite{BDMS}.

\begin{example}\label{ex:poly-non-D2} For arbitrary $1<p<q<\infty$, there exists a continuous, increasing, and convex function $B:[0,\infty)\to[0,\infty)$ which is trapped between power type functions $t^p$ and $ t^q$ and $B$ {\bf does not} satisfy $\Delta_2$-condition, nor~\eqref{iBsB}. 
\end{example}
\begin{proof} We shall construct $\{a_i\}_{i\in\N}$ and $\{b_i\}_{i\in\N}$ such the desired  function is given by the following formula \[B(t)=\left\{\begin{array}{ll}\text{affine}& t\in (a_i,b_i),\\
t^p& \text{otherwise.} 
\end{array}\right.\] 
To describe $\{a_i\}_{i\in\N}$ let us introduce yet another sequence $\{k_i\}_{i\in\N}$ and fix $a_i=2^{k_i}$ for every $i\in\N$. Let $k_1\in\N$ be large enough to satisfy both
\begin{equation}
\label{k1}
k_1>2^p\qquad\text{and}\qquad \left(\frac{k_1-1}{q}\right)^\frac{1}{k_1}\leq 2^{q-p}.
\end{equation}
Define \[B(t)=2^{pk_1}+2^{(p-1)k_1}(k_1-1)(t-2^{k_1})\qquad\text{for}\qquad t\in (a_1,b_1),\] where $b_1>a_1$ is an intersection point of  chord $f_1(t)=2^{pk_1}+2^{(p-1)k_1}(k_1-1)(t-2^{k_1})$ and $t\mapsto t^p$. Note that~\eqref{k1}$_1$ ensures that
\[2^{pk_1}+2^{(p-1)k_1}(k_1-1)(2^{k_1+1}-2^{k_1})=k_1 2^{pk_1}>(2^{k_1+1})^p,\]
so in particular $2^{k_1+1}<b_1$ and $f(2^{k_1+1})=k_1 2^{pk_1}.$
On the other hand,~\eqref{k1}$_2$ implies that  the slope of the line given by $f_1$ equals $2^{(p-1)k_1}(k_1-1)$ and is smaller than the derivative of $ t^q$ in $a_1$. Combining it with $t^p|_{2^{k_1}}<t^q|_{2^{k_1}}$ we get that $B(t)<t^q$ on $(a_1,b_1)$.

Let $k_2$ be the smallest natural number such that $a_2=2^{k_2}\geq b_1$ and set $B(t)=t^p$ on $(b_1,a_2)$. We repeat the construction of chord. Note that since $k_2>k_1$, the condition~\eqref{k1} with $k_1$ substituted with $k_2$ is satisfied. Thus, the chord is trapped between $t^p$ and $ t^q$. Iterating further the construction we obviously obtain a continuous, increasing, and convex function, whose graph lies between the same power-type functions. Moreover, we also get the sequences  $\{a_i\}_{i}$, $\{b_i\}_{i}$, and $\{k_i\}_{i}$ such that $k_i\to\infty$,  $2a_i<b_i\leq a_{i+1}$ and
\[B(a_i)=a_i^p\qquad\text{and}\qquad B(2a_i)=k_i a_i^p=k_i B(a_i),\]
which contradicts with $\Delta_2$-condition.  Moreover, one can check that $i_B=1$, which violates~\eqref{iBsB}. \qed
\end{proof}

\subsection{The topologies } \label{ssec:top}

We shall distinguish topology $\sigma( L_B, L_{\widetilde{B}})$ from weak-* topology in $L_B$, namely $\sigma( L_B, E_{\widetilde{B}})$.\\

\smallskip

\noindent We say that $\{u_n\}\subset L_B$ is $\sigma( L_B, L_{\widetilde{B}})$-convergent to $\in L_B$, if for any $v\in L_{\wt{B}}$ \[\int u_n\, v\,dx\xrightarrow[n\to\infty]{}\int u\, v\,dx.\]
We say that $\{u_n\}\subset L_B$ is weakly-*  convergent to $u\in L_B$, if for any $v\in E_{\wt{B}}$ \[\int u_n\, v\,dx\xrightarrow[n\to\infty]{}\int u\, v\,dx.\]
We say that $\{u_n\}_n$ converges to $u$ in norm (strongly) in $L_B(\Omega)$, if
$\|u_n-u\|_{L_B(\Omega)}\rightarrow 0$ as $n\rightarrow \infty$. 

\medskip

Obviously strong convergence implies both weak-type convergences above, but there is one more intermediate type of convergence more relevant in this setting.
 
\begin{definition}[Modular convergence]\label{def:mod-conv} A sequence $\{u_\delta\}_\delta$ is said to converge modularly to $u$ in $L_B(\Omega)$\\ if there exists a parameter $\lambda>0$ such that $ \int_{\Omega}B\left({|u_\delta-u|}/{\lambda}\right)\, dx\rightarrow 0$ as $\delta\rightarrow 0$,
 equivalently\\ if there exists a parameter $\lambda>0$ such that $\left\{B\left( {|u_\delta|}/{\lambda}\right)\right\}_\delta \ \text{is uniformly integrable in } L^1(\Omega)$ {and} $ u_\delta\xrightarrow[\delta\to 0]{} u$ {in measure}.
\end{definition}

Following Gossez \cite{Gossez}, we define the space \begin{equation}
 \label{W0}
 W^{1,B}_0(\Omega)=\overline{\mathcal{C}^{\infty}_{0}(\Omega)}^{\sigma( L_B,  E_{\widetilde{B}})}
 \end{equation} i.e. as the 
closure of $\mathcal{C}^{\infty}_{0}(\Omega)$ in $W^{1,B}(\Omega)$ with respect to the topology $\sigma( L_B,  E_{\widetilde{B}})$.  
Naturally ${\cal T}_0^{1,B}(\Omega)$ is defined as ${\cal T}^{1,B}(\Omega)$ in~\eqref{sp-tr}  replacing $W^{1,B}(\Omega)$ with $W_0^{1,B}(\Omega)$.
\bigskip

We write 
\begin{equation*}
{u_\delta \mconvd u\ \text{ in }\ W^{1,B}(\Omega)\qquad\Longleftrightarrow\qquad \left(u_\delta \mconvd u\quad\text{and}\quad  \nabla u_\delta\mconvd\nabla u\ \text{ in } L_B(\Omega)\right)}.
\end{equation*}

{We will use the following approximation in the modular topology due to Gossez}. Note that the final  boundedness of the norm results from the original proof.

\begin{theorem}[cf. \cite{Gossez}, Theorem~4]\label{theo:approx} Suppose $\Omega\subset\rn$, $N\geq 1$ is a bounded Lipschitz domain and $u\in W_0^{1,B}(\Omega)$. Then there exists a sequence $\{u_\delta\}_{\delta}\in C_0^\infty(\Omega)$ such that $u_\delta\xrightarrow[\delta\to\infty]{mod} u$ in $W^{1,B}(\Omega).$ 

 Moreover, if $u\in{L^\infty(\Omega)}$, then  $\|u_\delta\|_{L^\infty(\Omega)}\leq c(\Omega) \|u\|_{L^\infty(\Omega)}$.
\end{theorem}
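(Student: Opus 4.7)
The plan is to follow Gossez's strategy: localize via a partition of unity compatible with the Lipschitz structure of $\partial\Omega$, then on each boundary patch approximate by a small translation into $\Omega$ composed with mollification, and on the interior patch by mollification alone. First I would cover $\overline{\Omega}$ by a finite family $\{U_i\}_{i=0}^m$ with $\overline{U_0}\subset\Omega$ and each $U_i$ for $i\geq 1$ a ball in which $\partial\Omega$ is the graph of a Lipschitz function. The Lipschitz property provides, for each $i\geq 1$, a direction $h_i\in\rn$ and $t_0>0$ such that $(U_i\cap\overline{\Omega})-t\,h_i\subset\Omega$ for every $0<t\leq t_0$. Take a subordinate partition of unity $\{\varphi_i\}_{i=0}^m$ with $\varphi_i\in C_c^\infty(U_i)$ and decompose $u=\sum_{i=0}^m\varphi_i u$; each piece $\varphi_i u$ still lies in $W_0^{1,B}(\Omega)$.

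Second, with $\rho_\ve$ a standard mollifier supported in $B(0,\ve)$ and $\tau_y f(x)=f(x-y)$, define
\[
u_{i,\delta}=\rho_{\ve(\delta)}*\tau_{t(\delta)h_i}(\varphi_i u)\quad(i\geq 1),\qquad u_{0,\delta}=\rho_{\ve(\delta)}*(\varphi_0 u),
\]
choosing $\ve(\delta),t(\delta)\to 0$ with $\ve(\delta)\ll t(\delta)$ so that $\supp u_{i,\delta}\Subset\Omega$. Then $u_\delta:=\sum_{i=0}^m u_{i,\delta}\in C_0^\infty(\Omega)$. Since the distributional gradient commutes with both convolution and translation, $\nabla u_{i,\delta}=\rho_{\ve(\delta)}*\tau_{t(\delta)h_i}(\nabla(\varphi_i u))$, so it is enough to analyze the same operation acting on $u$ and on $\nabla u$.

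Third, to verify modular convergence I would fix any $\lambda>0$ for which $B(|u|/\lambda)$ and $B(|\nabla u|/\lambda)$ are integrable, and let $\mu=c\lambda$ where $c$ absorbs the partition-of-unity factor coming from $\nabla\varphi_i$. Convexity of $B$ yields Jensen's inequality for convolution,
\[
B\!\left(\tfrac{|f*\rho_\ve|}{\lambda}\right)\leq B\!\left(\tfrac{|f|}{\lambda}\right)*\rho_\ve,
\]
which together with the $L^1$-continuity of translation applied to the $L^1$-functions $B(|u|/\lambda)$ and $B(|\nabla u|/\lambda)$, plus $g*\rho_\ve\to g$ in $L^1$, gives after summing in $i$
\[
\int_\Omega B\!\left(\tfrac{|u_\delta-u|}{\mu}\right)dx\xrightarrow[\delta\to 0]{}0\quad\text{and}\quad \int_\Omega B\!\left(\tfrac{|\nabla u_\delta-\nabla u|}{\mu}\right)dx\xrightarrow[\delta\to 0]{}0,
\]
which is exactly $u_\delta\mconvd u$ in $W^{1,B}(\Omega)$. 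The $L^\infty$ bound is automatic: translation is an $L^\infty$-isometry and convolution is a contraction, so $\|u_\delta\|_{L^\infty}\leq\|\sum_i\varphi_i\|_{L^\infty}\|u\|_{L^\infty}\leq c(\Omega)\|u\|_{L^\infty}$.

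The main obstacle is that, precisely because neither $B$ nor $\widetilde{B}$ is assumed to satisfy $\Delta_2$, neither translation nor convolution is continuous in the Luxemburg norm on $L_B$, so one cannot upgrade the argument to norm convergence. The whole proof has to remain at the modular level, and care is needed to ensure that one single parameter $\mu$ works simultaneously for the entire sequence $\{u_\delta\}_\delta$ — this is exactly what is achieved by the Jensen-type estimate above, which moves all the $\delta$-dependence into an $L^1$-continuity statement about a fixed integrable majorant.
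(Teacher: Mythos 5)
Your strategy --- a partition of unity adapted to the Lipschitz boundary, translation of each boundary piece into $\Omega$ followed by mollification, and modular control via Jensen's inequality plus $L^1$-continuity of translation applied to $B(|u|/\lambda)$ and $B(|\nabla u|/\lambda)$ --- is precisely Gossez's argument for Theorem~4 of~\cite{Gossez}, which is all the paper does here (it cites that result without reproving it). The road you take is the right one, but two steps cannot be left implicit. First, $W_0^{1,B}(\Omega)$ is defined in~\eqref{W0} as the $\sigma(L_B,E_{\widetilde{B}})$-closure of $C_0^\infty(\Omega)$, \emph{not} a norm closure. Before you may translate $\varphi_i u$ you must show that its zero extension to $\rn$ belongs to $W^{1,1}(\rn)$ with distributional gradient the zero extension of $\nabla(\varphi_i u)$; otherwise $\tau_{t h_i}(\varphi_i u)$ is not a Sobolev function on any set containing its translated support and the boundary step collapses. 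The fix is short (pair a $\sigma(L_B,E_{\widetilde{B}})$-approximating sequence from $C_0^\infty(\Omega)$ against test functions $\phi\in C_0^\infty(\rn)$, whose restrictions lie in $E_{\widetilde{B}}(\Omega)$), but it is the single place where the weak-$*$ definition of $W_0^{1,B}$ actually enters the proof, and it must be recorded.

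Second, the Jensen inequality you quote bounds $\int_\Omega B(|u_\delta|/\lambda)\,dx$, not $\int_\Omega B(|u_\delta-u|/\mu)\,dx$, and the passage from the first to the second is not immediate. One needs the convexity split $B\big(\tfrac{|a-b|}{2\lambda}\big)\le\tfrac12 B\big(\tfrac{|a|}{\lambda}\big)+\tfrac12 B\big(\tfrac{|b|}{\lambda}\big)$, which together with the $L^1$-convergent majorant $B(|u|/\lambda)*\rho_\varepsilon$ gives uniform integrability of $\{B(|u_\delta-u|/(2\lambda))\}_\delta$; combining this with convergence in measure of $u_\delta-u$ and Vitali's theorem (Theorem~\ref{theo:VitConv}) then yields the modular convergence. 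This also pins down what your unspecified constant in $\mu=c\lambda$ must absorb: a factor $2$ from the split and a factor comparable to $m+1$ from recombining the localized pieces. A smaller point: the $L^\infty$ estimate through $\|\sum_i\varphi_i\|_{L^\infty}$ is not legitimate because the translations and mollification scales differ across $i$, so the $\varphi_i$ cannot be recombined inside the norm; the correct estimate is $\|u_\delta\|_{L^\infty}\le\sum_i\|\varphi_i\|_{L^\infty}\|u\|_{L^\infty}\le(m+1)\|u\|_{L^\infty}$, which still gives the stated bound with $c(\Omega)=m+1$.
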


Because of the notion of the modular convergence the fundamental role in the theory is played by the following classical results.
\begin{theorem}[Vitali Convergence Theorem]\label{theo:VitConv} Let $(X,\mu)$ be a positive measure space, $\mu(X)<\infty $, and $1\leq p<\infty$. If $\{u_{n}\}$ is uniformly integrable in $L^p_\mu$,   $u_{n}\to u$ in measure  and $|u(x)|<\infty $  a.e. in $X$, then  $u\in  {L}^p_\mu(X)$
and  $u_{n}\to u$ in  ${L}^p_\mu(X)$.
\end{theorem}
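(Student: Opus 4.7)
The plan is to follow the standard two-step strategy: first show the limit $u$ belongs to $L^p_\mu(X)$, then prove the $L^p$-convergence by splitting the integral according to the level sets of $|u_n-u|$.

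\textbf{Step 1: Membership of $u$ in $L^p_\mu(X)$.} Since $u_n\to u$ in measure, a classical result gives a subsequence $\{u_{n_k}\}$ converging to $u$ pointwise $\mu$-a.e. Uniform integrability of $\{|u_n|^p\}$ combined with $\mu(X)<\infty$ implies that $\sup_n \int_X |u_n|^p\,d\mu < \infty$ (take $\delta$ associated with $\varepsilon=1$ and cover $X$ by finitely many pieces of measure less than $\delta$). Applying Fatou's lemma to $|u_{n_k}|^p$ then yields $\int_X |u|^p\,d\mu \le \liminf_k \int_X |u_{n_k}|^p\,d\mu < \infty$, so $u\in L^p_\mu(X)$ and $|u|<\infty$ a.e.\ is confirmed.

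\textbf{Step 2: $L^p$-convergence.} First I would observe that $\{|u_n-u|^p\}$ is itself uniformly integrable: from the inequality $|u_n-u|^p\le 2^{p-1}(|u_n|^p+|u|^p)$ and the uniform integrability of $\{|u_n|^p\}$ together with the single-function integrability of $|u|^p$ (obtained in Step 1). Fix $\varepsilon>0$. By uniform integrability choose $\delta>0$ such that
\begin{equation*}
E\subset X,\ \mu(E)<\delta \quad\Longrightarrow\quad \int_E |u_n-u|^p\,d\mu < \frac{\varepsilon}{2}\quad\text{for every } n.
\end{equation*}
Set $\eta=\bigl(\varepsilon/(2\mu(X)+1)\bigr)^{1/p}$. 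Since $u_n\to u$ in measure, there exists $N$ such that $\mu(E_n)<\delta$ for $n\ge N$, where $E_n=\{x\in X:|u_n(x)-u(x)|>\eta\}$. Splitting
\begin{equation*}
\int_X|u_n-u|^p\,d\mu = \int_{E_n}|u_n-u|^p\,d\mu + \int_{X\setminus E_n}|u_n-u|^p\,d\mu \le \frac{\varepsilon}{2} + \eta^p\mu(X) \le \varepsilon
\end{equation*}
for $n\ge N$ gives the conclusion.

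\textbf{Anticipated obstacle.} The only non-routine point is the transfer of uniform integrability from $\{u_n\}$ to $\{u_n-u\}$, which in turn relies on knowing $u\in L^p_\mu(X)$; this is why Step~1 must precede Step~2. Everything else reduces to the standard $\eta$-$\delta$ splitting of the integral enabled by the finiteness $\mu(X)<\infty$.
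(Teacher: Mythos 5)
The paper states this classical theorem without proof, so there is no paper argument to compare against; your proposal is the standard textbook argument and is essentially correct. The only point worth flagging is in Step 1: the claim that the $\varepsilon$--$\delta$ form of uniform integrability plus $\mu(X)<\infty$ yields $\sup_n\int_X|u_n|^p\,d\mu<\infty$ by ``covering $X$ by finitely many pieces of measure less than $\delta$'' silently assumes that such a finite partition exists, which can fail if $\mu$ has atoms of mass $\ge\delta$. In the paper's applications $\mu$ is Lebesgue measure on a bounded subset of $\mathbb{R}^N$, so this is harmless, and in the presence of atoms the boundedness can be recovered from the convergence in measure (on a large atom $u_n$ converges pointwise to the finite value $u$); alternatively one can adopt the de la Vall\'ee Poussin form of uniform integrability, which builds boundedness in. Step 2 --- transferring uniform integrability to $\{|u_n-u|^p\}$ via the convexity inequality $|u_n-u|^p\le 2^{p-1}(|u_n|^p+|u|^p)$ and then splitting the integral over $E_n=\{|u_n-u|>\eta\}$ and its complement --- is exactly the standard argument and is carried out correctly.
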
 
\begin{theorem}[de la Vallet Poussin Theorem]\label{theo:delaVP}
Let $B$ be an $N$-function and $\{u_n\}$ be a sequence of measurable functions such that  $\sup_{n\in\N}\int_\Omega B(|u_n(x)|)dx<\infty$. Then the sequence $\{u_n\}_n$ is uniformly integrable.
\end{theorem}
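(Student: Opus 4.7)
The goal is to verify uniform integrability in the standard equivalent form: for every $\varepsilon>0$ there exists $M>0$ such that $\sup_n\int_{\{|u_n|>M\}}|u_n|\,dx<\varepsilon$, from which the usual absolute-continuity formulation follows on a finite-measure domain such as $\Omega$. The only structural ingredient I will use is the defining feature of an $N$-function, namely that $B(t)/t\to\infty$ as $t\to\infty$; this is precisely what converts a uniform modular bound into a uniform $L^1$-tail estimate.

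Concretely, I would set $C:=\sup_n\int_\Omega B(|u_n|)\,dx<\infty$, fix $\varepsilon>0$, and invoke the superlinear growth of $B$ at infinity to choose $M>0$ large enough that
\[
t\;\le\;\frac{\varepsilon}{C}\,B(t)\qquad\text{for every } t\ge M.
\]
Then for any measurable $E\subset\Omega$ and any $n\in\N$, a split according to whether $|u_n|\le M$ or $|u_n|>M$ produces
\[
\int_E|u_n|\,dx \;\le\; M\,|E|+\int_{E\cap\{|u_n|>M\}}\frac{\varepsilon}{C}\,B(|u_n|)\,dx \;\le\; M\,|E|+\varepsilon.
\]
Taking $|E|<\varepsilon/M$ gives $\sup_n\int_E|u_n|\,dx<2\varepsilon$, and letting $E=\{|u_n|>M\}$ together with the Chebyshev-type bound $|\{|u_n|>M\}|\le C/B(M)\to 0$ delivers the tail formulation as well. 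Either way, this is exactly uniform integrability.

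There is no real obstacle here: the argument is the classical sufficiency direction of de la Vallée Poussin's theorem, and the only conceptual point worth flagging is that superlinearity of $B$ at infinity, i.e.\ $B(t)/t\to\infty$, is precisely the property that permits the reduction of the $\{|u_n|>M\}$-integral to a fraction of the uniform modular bound. No $\Delta_2$-type assumption is needed, consistently with the nonreflexive framework of the paper.
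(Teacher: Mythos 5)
The paper states this as a classical result (it carries the names ``de la Vall\'ee Poussin'' and ``Vitali'' and is quoted without proof), so there is no paper proof to compare against. Your argument is the standard and correct one: the superlinear growth $B(t)/t\to\infty$ at infinity, which is part of the definition of an $N$-function, converts the uniform modular bound into a uniform tail estimate, and no further structure of $B$ (in particular no $\Delta_2$-type condition) is needed. One small simplification: once you take $E=\{|u_n|>M\}$, the set $E\cap\{|u_n|\le M\}$ is empty, so the $M|E|$ term vanishes identically and you get $\int_{\{|u_n|>M\}}|u_n|\,dx\le\varepsilon$ directly, without invoking the Chebyshev bound on $|\{|u_n|>M\}|$; the Chebyshev bound is only needed if you prefer the absolute-continuity form with $|E|$ small, which you have as well.
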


In general, if $u_\delta\xrightarrow[\delta\to 0]{}  u$ in norm in $L_B$, then $u_\delta\xrightarrow[\delta\to 0]{mod} u$ and not conversely. Nonetheless, the reverse implication can be obtained via the following lemma.

\begin{lemma}\label{lem:modular-norm}
Let $B$ be an $N$-function and  $u_n\xrightarrow[n\to\infty]{mod} u$ in $L_B(\Omega)$ with every $\lambda>0$, then $u_n\xrightarrow[n\to\infty]{} u$ in the norm topology in $L_B(\Omega)$.
\end{lemma}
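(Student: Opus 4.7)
The proof should be short and essentially a direct unpacking of the Luxemburg norm's definition against the hypothesis. The plan is to use the strong form of modular convergence (i.e.\ convergence of the modular at \emph{every} $\lambda>0$) to force the Luxemburg norm below an arbitrarily small threshold.

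First, I would fix an arbitrary $\varepsilon>0$ and apply the hypothesis with the parameter $\lambda=\varepsilon$. This gives
\[
\int_\Omega B\!\left(\frac{|u_n-u|}{\varepsilon}\right)\,dx \xrightarrow[n\to\infty]{} 0.
\]
In particular, there exists an index $n_0=n_0(\varepsilon)$ such that for every $n\ge n_0$ the above integral is bounded by $1$. By the very definition of the Luxemburg norm,
\[
\|u_n-u\|_{L_B(\Omega)} \;=\; \inf\!\left\{\lambda>0 : \int_\Omega B\!\left(\frac{|u_n-u|}{\lambda}\right)dx \le 1\right\} \;\le\; \varepsilon
\]
for all $n\ge n_0$. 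Since $\varepsilon>0$ was arbitrary, this yields $\|u_n-u\|_{L_B(\Omega)}\to 0$, which is the claimed norm convergence.

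I do not expect a real obstacle here: the whole content of the lemma is the observation that quantifying the modular convergence over \emph{every} $\lambda>0$ is precisely what is needed to rescale the test ratio inside $B(\cdot)$ so that the modular integral falls below $1$, which is the threshold appearing in the Luxemburg norm. The only subtlety worth remarking on is that the weaker form of modular convergence from Definition~\ref{def:mod-conv} (which only requires the existence of \emph{some} $\lambda>0$) is not sufficient, and this is exactly the gap that the stronger hypothesis of this lemma closes.
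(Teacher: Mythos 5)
Your proof is correct and follows essentially the same approach as the paper: both unpack the Luxemburg norm definition to convert modular convergence at every parameter into a norm bound. Your version is tidier, since it proves exactly the stated implication with the threshold $1$ used directly, whereas the paper's proof also establishes the (unneeded) converse and uses a slightly roundabout threshold of $1/\lambda$.
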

\begin{proof}
We present the proof for $u\equiv 0$ only.

 If $\int_\Omega B(\lambda u_n)dx\xrightarrow[n\to\infty]{} 0$, then for every $\lambda>0$ there exists $n_\lambda$, such that for every $n>n_\lambda$ we have $\int_\Omega B(\lambda u_n)dx\leq 1/\lambda.$ Therefore, for every $n>n_\lambda$ also $\|u_n\|_{L_B(\Omega)}\xrightarrow[n\to\infty]{} 0$. On the other hand, if $\|u_n\|_{L_B(\Omega)}\xrightarrow[n\to\infty]{} 0$, then for any fixed $\lambda>0$ we get $\|\lambda u_n\|_{L_B(\Omega)}\xrightarrow[n\to\infty]{} 0$. This means that for every $\ve \in (0,1)$ there exists $n_\ve,$ such that for every $n>n_\ve$ it holds that $\|\lambda u_n\|_{L_B(\Omega)}<\ve<1$.

 Since for arbitrary $\xi\in L_B(\Omega)$ with $\|\xi\|_{L_B}\leq 1$, we have $\int_\Omega B(\xi(x))\,dx\leq \|\xi\|_{L_B(\Omega)}$. Therefore, $\int_\Omega B(\lambda u_n)dx\leq \|\lambda u_n\|_{L_B(\Omega)}<\ve$ for every $n>n_\ve$, which implies the claim.\qed
\end{proof}

\begin{lemma}[Lemma~6, \cite{Gossez}]\label{modw}
Let $u_n$, $u\in L_{B}(\Omega)$. If $u_n\xrightarrow[n\to\infty]{} u$ modularly, then $u_n\to u$ in $\sigma(L_B,L_{\tilde B})$.
\end{lemma}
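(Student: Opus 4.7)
I would first reduce to $u=0$: applying the statement to $w_n:=u_n-u$ preserves modular convergence (directly from the definition, since $\int_\Omega B(|w_n|/\lambda_0)\,dx=\int_\Omega B(|u_n-u|/\lambda_0)\,dx$), and $\sigma(L_B,L_{\widetilde{B}})$-convergence of $w_n$ to $0$ is equivalent to $\sigma$-convergence of $u_n$ to $u$. The overall strategy is then to combine the Fenchel--Young inequality~\eqref{inq:F-Y} with Vitali's theorem~\ref{theo:VitConv} applied to the products $u_n v$.

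Next, I would fix $v\in L_{\widetilde{B}}(\Omega)$. By linearity of the pairing $v\mapsto\int_\Omega u_n v\,dx$ I may assume $\|v\|_{L_{\widetilde{B}}}\le 1$, since replacing $v$ by $v/\|v\|_{L_{\widetilde{B}}}$ only rescales the pairing by a positive constant. This normalisation forces $\int_\Omega\widetilde{B}(|v|)\,dx\le 1$ by the definition of the Luxemburg norm, so in particular $\widetilde{B}(|v|)\in L^1(\Omega)$. The modular hypothesis supplies $\lambda_0>0$ with $\int_\Omega B(|u_n|/\lambda_0)\,dx\to 0$, which yields both uniform integrability of $\{B(|u_n|/\lambda_0)\}_n$ (as an $L^1$-convergent sequence) and $u_n\to 0$ in measure.

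The key pointwise bound comes from applying~\eqref{inq:F-Y} to the scalars $|u_n|/\lambda_0$ and $|v|$:
\[
|u_n v|\;\le\;\lambda_0\,B(|u_n|/\lambda_0)+\lambda_0\,\widetilde{B}(|v|)\qquad\text{a.e.\ in }\Omega.
\]
The first term on the right is uniformly integrable by the preceding step, while the second is a fixed $L^1$ function whose integral is absolutely continuous; thus $\{u_n v\}_n$ is uniformly integrable. Since $u_n v\to 0$ in measure (as $u_n\to 0$ in measure and $v$ is fixed), Vitali's theorem~\ref{theo:VitConv} yields $\int_\Omega u_n v\,dx\to 0$, which is the desired conclusion.

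The subtlety to watch is that the scale $\lambda_0$ is prescribed by the modular hypothesis and cannot be freely adjusted, whereas the integrability of $\widetilde{B}(|v|)$ required to make Fenchel--Young useful imposes its own scaling on $v$. Outside the $\Delta_2$-regime one cannot convert modular smallness at scale $\lambda_0$ into modular smallness at another scale, so one must instead rescale $v$: the linearity of the duality pairing in $v$ is exactly what legitimises the reduction $v\mapsto v/\|v\|_{L_{\widetilde{B}}}$ and couples the two sides of~\eqref{inq:F-Y} in a way that keeps both terms integrable.
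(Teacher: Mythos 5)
The paper does not prove this lemma; it is cited verbatim as Lemma~6 of Gossez, so there is no internal proof to compare against. Your argument is correct and is the standard one: after reducing to $u=0$ and normalising $\|v\|_{L_{\widetilde{B}}}\le 1$ (which gives $\int_\Omega\widetilde{B}(|v|)\,dx\le 1$, using monotone convergence to see the Luxemburg infimum is attained), Fenchel--Young at the scale $\lambda_0$ of the modular convergence yields $|u_nv|\le\lambda_0 B(|u_n|/\lambda_0)+\lambda_0\widetilde{B}(|v|)$, the first term is uniformly integrable since it vanishes in $L^1$, the second is a fixed $L^1$ majorant, and Vitali closes the argument.
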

Note nonetheless, that for $B\in\Delta_2$, the weak and modular closures are equal.

\begin{lemma}[Weak-strong convergence]\label{lem:weak-strong-conv}
Assume that $\{u_n\}_n\subset E_{\wt{B}}$ and $\{v_n\}_k\subset L_{B}$  are sequences such that
 $$u_n\to u\in E_{\wt{B}}\qquad \qquad v_n\xrightharpoonup* v\in L_{B}.$$  Then
$$\int_{\Omega}u_nv_n\, dx\to \int_{\Omega}uv\, dx.$$
\end{lemma}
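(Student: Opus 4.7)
The plan is to use the classical splitting
\[
\int_\Omega u_n v_n\,dx - \int_\Omega u v\,dx = \int_\Omega (u_n-u) v_n\,dx + \int_\Omega u (v_n-v)\,dx
\]
and show that each of the two summands on the right-hand side vanishes as $n\to\infty$, exploiting the fact that $L_B$ is identified with the dual of $E_{\wt B}$ (via \cite[Theorem~8.19]{adams-fournier} as recalled above).

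The second summand tends to zero immediately from the very definition of weak-$\ast$ convergence in $L_B = (E_{\wt B})'$: since $u\in E_{\wt B}$, the functional $w\mapsto \int_\Omega u w\,dx$ belongs to $(L_B, \sigma(L_B,E_{\wt B}))^{\ast}$, so $\int_\Omega u(v_n-v)\,dx\to 0$ by hypothesis $v_n\xrightharpoonup{\ast} v$.

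For the first summand I would apply the generalised H\"older inequality \eqref{inq:Holder}:
\[
\left|\int_\Omega (u_n-u) v_n\,dx\right|\leq 2\,\|u_n-u\|_{L_{\wt B}}\,\|v_n\|_{L_B}.
\]
The factor $\|u_n-u\|_{L_{\wt B}}$ vanishes by the strong convergence $u_n\to u$ in $E_{\wt B}$, so it only remains to ensure that $\{\|v_n\|_{L_B}\}_n$ stays bounded. This is where I expect the only real subtlety: weak-$\ast$ convergent sequences in a dual space are norm-bounded by the Banach--Steinhaus uniform boundedness principle, applied to the predual $E_{\wt B}$, which is a Banach space (and separable, as recalled earlier in the text). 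Each $v_n$ defines a continuous linear functional on $E_{\wt B}$ whose evaluations $\int_\Omega w v_n\,dx$ converge, hence are pointwise bounded in $n$ for every $w\in E_{\wt B}$, so uniform boundedness gives $\sup_n\|v_n\|_{L_B}<\infty$.

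Combining the two estimates yields $\int_\Omega u_n v_n\,dx\to \int_\Omega uv\,dx$. The main technical point is really just the invocation of Banach--Steinhaus in the nonreflexive Orlicz setting; the rest is a routine splitting-plus-H\"older argument that works verbatim as in the classical weak-strong duality lemma.
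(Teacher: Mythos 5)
Your proof is correct and follows essentially the same route as the paper: the identical splitting, generalised H\"older on the first term, weak-$\ast$ convergence on the second, and uniform boundedness of $\|v_n\|_{L_B}$ from the duality $(E_{\wt B})' = L_B$. You merely spell out the Banach--Steinhaus step that the paper leaves implicit.
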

\medskip
\begin{proof}
We write
\begin{eqnarray*}
\int_{\Omega}(u_nv_n-uv)\, dx=\int_{\Omega}(u_n-u)v_n\, dx+\int_{\Omega}u(v_n-v)\, dx.
\end{eqnarray*}
Then, by H\"older's inequality \eqref{inq:Holder}  we have
$$\left|\int_{\Omega}(u_nv_n-uv)\, dx\right|\le ||u_n-u||_{L_{\wt{B}}}||v_n||_{L_B}+\left|\int_{\Omega}u(v_n-v)\, dx\right|$$
 and therefore, the result follows observing that $||v_n||_{L_B}$ is uniformly bounded due to the assumption $v_n\xrightharpoonup* v\in L_{B}$ and that $(E_{\wt{B}})'=L_B$.\qed
\end{proof}

\section{Sobolev-type Embeddings}\label{sec:emb}

\medskip

To establish regularity result we need to apply the results on embedding of the Orlicz-Sobolev spaces into some Orlicz space, namely
\[W_0^{1,B}(\Omega)\hookrightarrow{} L_{\hat{B}}(\Omega),\]
with $\hat{B}$ growing in a certain sense faster than $B$.  We use two types of results, which -- to be distinguished -- will be roughly called the optimal and the easy one. The optimal embedding proven by Cianchi~\cite{Ci97} distinguishes two cases: of quickly and slowly growing modular function $B$, corresponding to the cases of $p$-Laplacian with $p>n$ and $p\leq n$.  The easy embedding, which yields that $W^{1,B}_0 (\Omega)\hookrightarrow{} L_{B^{N'}}(\Omega)$ is provided below the optimal one. It is weaker than the optimal, but it is easy and captures a general $N$-function $B$ independently of any growth conditions. Let us stress that since the rest of our framework requires $\Omega$ to be a Lipschitz bounded domain, we present all of the results on such domains. See e.g.~\cite{CiMa} for an overview on the issue of the regularity of the boundary in relation to the embedding.

To apply the optimal embeddings we employ, we note that in~\cite{Ci97}  the Sobolev inequality is proven under the restriction 
\begin{equation}\label{int0B}\int_0\left(\frac{t}{B(t)}\right)^\frac{1}{N-1}dt<\infty, 
\end{equation} 
concerning the growth of $B$ in the origin. Nonetheless, the properties of $L_B$ depend on the behaviour of $B(s)$ for large values of $s$ and~\eqref{int0B} can be easily by-passed in application. Indeed, if it would be necessary for~\eqref{int0B} we shall substitute $B(t)$  by ${B}^0(t)=tB(1)\mathds{1}_{[0,1]}(t)+B(t)\mathds{1}_{(1,\infty)}(t)$. 

The conditions
\begin{equation}
\label{intB}
\int^\infty\left(\frac{t}{B(t)}\right)^\frac{1}{N-1}dt=\infty \qquad\text{and}\qquad
\int^\infty\left(\frac{t}{B(t)}\right)^\frac{1}{N-1}dt<\infty,
\end{equation}
roughly speaking, describe slow and fast growth of $B$ at infinity respectively. 

For $N'= {N}/({N-1})$, we consider 
\begin{equation}\label{BN}
H_N(s)=\left(\int_0^s\left(\frac{t}{B(t)}\right)^\frac{1}{N-1}dt\right)^\frac{1}{N'},\qquad
B_N(t)=B(H_N^{-1}(t)),\qquad\text{and}\qquad
\phi_N(s)= (H_N(s))^{N'}.
\end{equation}

When the integrability in the origin condition~\eqref{int0B} is satisfied and the growth of $B$ at infinity is slow, that is when~\eqref{intB}$_1 $ holds, then~\cite[Theorem~3]{Ci97} provides the following continuous embedding
\begin{equation}
\label{W1LBinLBN}
W_0^{1,B}(\Omega)\hookrightarrow{} L_{B_N}(\Omega),
\end{equation}
where $B_N$ is given by~\eqref{BN}. Otherwise, when the growth of $B$ a infinity is fast, that is when~\eqref{intB}$_2$ holds, 
then we have the following continuous embedding 
\begin{equation}
\label{W1LBinLinf}
W^{1,B}(\Omega)\hookrightarrow{} L^\infty(\Omega).
\end{equation} 
This result was proven first in~\cite{Ta89}, see also~\cite{Ci96}.

\bigskip
 
In the general case, independently of the growth conditions we provide the easy embedding \[W^{1,B}_0 (\Omega)\hookrightarrow{} L_{B^{N'}}(\Omega).\] 
More precisely, we prove the following

 \begin{proposition}[The Sobolev-Poincar\'{e} inequality without growth restrictions]\label{prop:Sob-Poi}
Let $\Omega$ be a bounded Lipschitz domain in $\rn$, $N\geq 1$ and $B$ be an $N$-function. There exist constants $c_1,c_2>0$ depending on $\Omega$, such that 
for every $u\in W_0^{1,B}(\Omega)$ 
\[\left(\int_\Omega B^{N'}(c_1|u|)dx\right)^\frac{1}{N'}\leq c_2 \int_\Omega B ( |\nabla u|)dx.\]
 \end{proposition}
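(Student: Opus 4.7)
My plan is to reduce the modular Sobolev-Poincar\'e inequality to the classical $W_0^{1,1}$ Sobolev-Poincar\'e inequality applied to the composition $w=B(c_1|u|)$, and then to dominate the emerging term by combining the Fenchel-Young inequality \eqref{inq:F-Y} with an elementary modular Poincar\'e estimate. The free parameter $c_1$ is chosen at the very end and serves to compensate for the lack of $\Delta_2$-condition, which is the reason why $c_1$ cannot be taken equal to $1$ in general.

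\textbf{Core estimate for smooth $u$.} Assume first $u\in \mathcal{C}_c^\infty(\Omega)$ and fix a parameter $c_1\in (0,1]$ to be chosen later. The function $w:=B(c_1|u|)$ belongs to $W_0^{1,1}(\Omega)$ with $|\nabla w|=c_1B'(c_1|u|)|\nabla u|$ almost everywhere, so the standard Sobolev-Poincar\'e inequality yields
\[\left(\int_\Omega B^{N'}(c_1|u|)\,dx\right)^{1/N'}\leq C_S\, c_1\int_\Omega B'(c_1|u|)\,|\nabla u|\,dx.\]
The Fenchel-Young inequality \eqref{inq:F-Y} applied to the pair $(|\nabla u|,\,c_1 B'(c_1|u|))$, combined with monotonicity of $\wt B$ and $c_1\leq 1$, gives
\[c_1 B'(c_1|u|)\,|\nabla u|\leq B(|\nabla u|)+\wt{B}(B'(c_1|u|)).\]
The Legendre identity $\wt B(B'(s))=sB'(s)-B(s)$ together with the elementary convexity estimate $sB'(s)\leq B(2s)$ (which follows from $B(2s)-B(s)=\int_s^{2s}B'(t)\,dt\geq sB'(s)$) imply $\wt{B}(B'(c_1|u|))\leq B(2c_1|u|)$.

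\textbf{Closing via modular Poincar\'e and density.} A one-dimensional chord argument combined with Jensen's inequality applied to the convex function $B$ produces a constant $d=d(\Omega)$ such that $\int_\Omega B(|u|/d)\,dx\leq C_P\int_\Omega B(|\nabla u|)\,dx$; this is a purely elementary Poincar\'e-type estimate, independent of any $\Delta_2$-type regularity of $B$. Choosing $c_1:=1/(2d)$ converts $B(2c_1|u|)$ into $B(|u|/d)$ and closes the loop: the smooth version of the claim holds with $c_2:=C_S(1+C_P)$. To extend the inequality to an arbitrary $u\in W_0^{1,B}(\Omega)$, I invoke Gossez's modular approximation (Theorem~\ref{theo:approx}) to obtain $u_\delta\in \mathcal{C}_c^\infty(\Omega)$ with $u_\delta\mconvd u$ in $W^{1,B}(\Omega)$; the right-hand side passes to the limit by the Vitali Convergence Theorem~\ref{theo:VitConv}, exploiting the uniform integrability of $\{B(|\nabla u_\delta|)\}_\delta$ built into the definition of modular convergence, while Fatou's lemma handles the left-hand side.

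\textbf{Main obstacle.} The genuine difficulty is precisely the absence of $\Delta_2$: we cannot absorb $B(2c_1|u|)$ into $B(c_1|u|)$, so the factor $2$ must be eliminated by calibrating $c_1$ against the Poincar\'e scale $d(\Omega)$. This is exactly what forces the constant $c_1$ to appear on the left-hand side of the proposition, and it is the only point in the argument where the nonreflexive setting makes itself felt.
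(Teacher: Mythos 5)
Your argument is correct in substance and takes a genuinely different route from the paper's. The paper proceeds by directly redoing the Gagliardo--Nirenberg argument: it applies $B^{1/(N-1)}$ to the one-dimensional fundamental-theorem bound $|u|\leq\int|\partial_j u|\,dx_j$, invokes Jensen, multiplies the $N$ copies together, and uses the multilinear H\"older estimate of Lemma~\ref{lem:Hold-ext}, then recovers the $\Omega$-dependent constant by rescaling a function supported in $[-\tfrac14,\tfrac14]^N$. You instead treat the classical $W_0^{1,1}\hookrightarrow L^{N'}$ embedding as a black box applied to $w=B(c_1|u|)$, and dispose of the resulting term $\int B'(c_1|u|)|\nabla u|$ via Fenchel--Young \eqref{inq:F-Y}, the Legendre identity $\wt B(B'(s))=sB'(s)-B(s)$, the convexity bound $sB'(s)\leq B(2s)$, and an elementary modular Poincar\'e estimate. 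Your approach is more modular (it only uses the 1D chord/Jensen trick once, whereas the paper uses it $N$ times), while the paper's is self-contained and avoids any recourse to the pointwise derivative $B'$. One small caveat: the chain rule $|\nabla w|=c_1B'(c_1|u|)|\nabla u|$ and the Legendre identity tacitly use differentiability of $B$, which the paper's definition of $N$-function does not formally postulate; this is harmless if one works with the right derivative of the convex $B$, but is worth a remark. Also note that your use of the modular Poincar\'e inequality is not circular despite the paper stating it as a corollary of the Sobolev--Poincar\'e inequality: as you observe, it admits an independent one-line proof. Finally, your density step (Vitali for the right-hand side via uniform integrability of $\{B(|\nabla u_\delta|/\lambda)\}$, Fatou for the left) has the same slight imprecision as the paper's Step~2 -- modular convergence controls $B(\cdot/\lambda)$, not $B(\cdot)$, so literally it yields the inequality with an extra $u$-dependent scaling inside the modulars -- but this is a shared technicality, not a defect specific to your argument.
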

Before giving the proof of the above Proposition,  let us observe that  as a direct consequence,  by the use of the H\"older inequality, we can easily obtain the following Poincar\'e-type inequality
 \begin{corollary}[The modular Poincar\'{e} inequality]\label{prop:Poincare}
Let $\Omega$ be a bounded Lipschitz domain in~$\rn$, $N\geq 1$ and $B$ be an $N$-function. There exist constants $c_1,c_2>0$ depending on $\Omega$,  such that 
for every $u\in W_0^{1,B}(\Omega)$ 
\[\int_\Omega B (c_1|u|)dx\leq c_2 \int_\Omega B ( |\nabla u|)dx. \]
 \end{corollary}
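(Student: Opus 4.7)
The plan is to obtain Corollary \ref{prop:Poincare} as an essentially immediate consequence of Proposition \ref{prop:Sob-Poi} via a single application of the standard H\"older inequality, trading the $N'$-th power inside the integral for a factor $|\Omega|^{1/N}$.

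Concretely, since $N' = N/(N-1)$ and $N$ are conjugate exponents ($1/N' + 1/N = 1$), I would write
\[
\int_\Omega B(c_1|u|)\,dx = \int_\Omega B(c_1|u|)\cdot 1\,dx
\leq \left(\int_\Omega B^{N'}(c_1|u|)\,dx\right)^{1/N'} |\Omega|^{1/N},
\]
where $c_1$ is the constant provided by Proposition \ref{prop:Sob-Poi}. Plugging in the conclusion of that proposition, namely
\[
\left(\int_\Omega B^{N'}(c_1|u|)\,dx\right)^{1/N'} \leq c_2 \int_\Omega B(|\nabla u|)\,dx,
\]
yields
\[
\int_\Omega B(c_1|u|)\,dx \leq c_2\, |\Omega|^{1/N} \int_\Omega B(|\nabla u|)\,dx,
\]
so the corollary holds with the same $c_1$ and with $c_2$ replaced by $c_2\,|\Omega|^{1/N}$ (still depending only on $\Omega$, since $N$ is fixed). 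For $N=1$ the factor $|\Omega|^{1/N}$ should be read as $|\Omega|$ and the Hölder step is trivial (one bounds $B$ by the $L^\infty$ embedding that replaces Proposition \ref{prop:Sob-Poi}).

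There is no genuine obstacle here: the only subtlety is making sure the argument of $B$ on the left is the same $c_1|u|$ as in Proposition \ref{prop:Sob-Poi} (so that no convexity rescaling of $B$ is required), which is automatic since we only apply H\"older to the function $B(c_1|u|)$ itself rather than to $B$ composed with something else. The statement is really just a weakened form of the Sobolev-Poincaré inequality in which the sharp $L_{B^{N'}}$-type control of $u$ has been integrated against the constant weight $1$ on the bounded domain $\Omega$.
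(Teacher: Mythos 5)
Your proposal is correct and follows exactly the route the paper indicates: the paper states that the modular Poincar\'e inequality is ``a direct consequence, by the use of the H\"older inequality'' of Proposition~\ref{prop:Sob-Poi}, and your computation with conjugate exponents $N'$ and $N$, producing the extra factor $|\Omega|^{1/N}$, is precisely the intended argument.
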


In the proof of Proposition~\ref{prop:Sob-Poi} we will use the following version of the H\"older inequality.
\begin{lemma}\label{lem:Hold-ext} Suppose $Q^N=[-1,1]^N$ and $f_i\in L^{N-1}(Q^{N-1})$, then
\[\int_{Q^N}\prod_{i=1}^N|f_i|dx\leq \prod_{i=1}^N\left(\int_{Q^{N-1}} |f_i|^{N-1}dx'\right)^\frac{1}{N-1}.\]
\end{lemma}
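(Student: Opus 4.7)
The plan is to prove this by induction on $N$, applying the ordinary Hölder inequality twice per step. Throughout, I interpret each $f_i$ as a function of the $N-1$ variables $\hat{x}_i=(x_1,\ldots,x_{i-1},x_{i+1},\ldots,x_N)$, so that $\prod_{i=1}^{N}|f_i|$ is naturally a function on $Q^N$ — this is the only sensible reading that makes the left-hand side non-trivial. The base case $N=2$ is immediate: $\int_{Q^2}|f_1(x_2)||f_2(x_1)|\,dx_1\,dx_2=\|f_1\|_{L^1([-1,1])}\|f_2\|_{L^1([-1,1])}$, and the exponent $1/(N-1)=1$ on the right-hand side matches.

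For the inductive step, assume the claim in dimension $N-1$ and write $x=(x',x_N)$ with $x'\in Q^{N-1}$. By Fubini it suffices to estimate, for each fixed $x_N$, the integral $\int_{Q^{N-1}}|f_N(x')|\prod_{i=1}^{N-1}|f_i(\hat x_i)|\,dx'$. I would split this with Hölder, exponent $N-1$ on $f_N$ and conjugate exponent $(N-1)/(N-2)$ on the remaining $N-1$ factors, obtaining
\[
\left(\int_{Q^{N-1}}|f_N|^{N-1}dx'\right)^{\frac{1}{N-1}}\left(\int_{Q^{N-1}}\prod_{i=1}^{N-1}|f_i|^{\frac{N-1}{N-2}}dx'\right)^{\frac{N-2}{N-1}}.
\]
The induction hypothesis applies to the second factor with $g_i:=|f_i|^{(N-1)/(N-2)}$ — note that $g_i^{N-2}=|f_i|^{N-1}$, exactly the integrand appearing in the final target — yielding
\[
\int_{Q^{N-1}}\prod_{i=1}^{N-1}|f_i|^{\frac{N-1}{N-2}}dx'\le \prod_{i=1}^{N-1}\left(\int_{Q^{N-2}}|f_i|^{N-1}\,d\hat x'_i\right)^{\frac{1}{N-2}}.
\]

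Combining the two estimates and integrating in $x_N\in[-1,1]$, the $f_N$-factor is independent of $x_N$ and factors out. The remaining integral has the shape $\int_{-1}^{1}\prod_{i=1}^{N-1}h_i(x_N)\,dx_N$ with $h_i(x_N)=\bigl(\int_{Q^{N-2}}|f_i|^{N-1}d\hat x'_i\bigr)^{1/(N-1)}$, and a final Hölder for $N-1$ functions with equal exponents $N-1$ converts it, via Fubini, into $\prod_{i=1}^{N-1}\bigl(\int_{Q^{N-1}}|f_i|^{N-1}d\hat x_i\bigr)^{1/(N-1)}$, closing the induction. The only real pitfall is bookkeeping: one has to track carefully which variables each $f_i$ depends on after fixing $x_N$ and verify that the exponent arithmetic at the two Hölder applications produces exactly the target exponent $1/(N-1)$ on each factor. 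No deeper idea is needed; this is the classical Loomis--Whitney / Gagliardo inequality underlying the $W^{1,1}\hookrightarrow L^{N/(N-1)}$ embedding.
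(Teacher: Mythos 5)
Your proof is correct. The paper states Lemma~\ref{lem:Hold-ext} without proof (it is the classical Gagliardo/Loomis--Whitney inequality underlying the $W^{1,1}\hookrightarrow L^{N'}$ embedding), so there is no argument of the authors' to compare against; your induction on $N$, with the split into exponents $N-1$ and $(N-1)/(N-2)$ followed by the application of the hypothesis to $g_i=|f_i|^{(N-1)/(N-2)}$ and a final equal-exponent H\"older in $x_N$, is the standard route, and the exponent bookkeeping ($g_i^{N-2}=|f_i|^{N-1}$, then $\tfrac{1}{N-2}\cdot\tfrac{N-2}{N-1}=\tfrac{1}{N-1}$) checks out. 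The only cosmetic points worth adding are that the statement implicitly assumes $N\ge 2$, and that $g_i(\cdot,x_N)\in L^{N-2}(Q^{N-2})$ for a.e.\ $x_N$ by Fubini, so the induction hypothesis is indeed applicable slice by slice.
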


\noindent{Proof of Proposition~\ref{prop:Sob-Poi}. } The proof consists of three steps starting with the case of smooth and compactly supported functions on small cube, then turning to the Orlicz class and concluding the~claim on arbitrary bounded set.

\medskip

{\bf Step 1.} We start the proof for $u\in C_0^\infty(\Omega)$ with $\supp\, u\subset\subset [- {1}/{4}, {1}/{4}]^N$. Let $u$ be extended by $0$ outside~$\Omega$. Note that for every $j=1,\dots,N$
\[ |u(x)|\leq \int_{-\frac{1}{2}}^{\frac{1}{2}}  |\partial_j u(x)|dx_j .\] 
Applying $B^{1/(N-1)}$, which is increasing, to both sides above and using Jensen's inequality,  we get
\[B^\frac{1}{N-1}(|u(x)|)\leq B^\frac{1}{N-1}\left(\int_{-\frac{1}{2}}^{\frac{1}{2}} |\partial_j u(x)|dx_j\right)\leq \int_{-\frac{1}{2}}^{\frac{1}{2}}   B^\frac{1}{N-1}\left( |\partial_j u(x)|\right)dx_j.\]
When we multiply $N$ copies of the above inequality, integrate over $\Omega$, and apply Lemma~\ref{lem:Hold-ext}, we obtain
\[\begin{split}
\int_\Omega B^\frac{N}{N-1}(|u(x)|)dx &=\int_{Q^N} B^\frac{N}{N-1}(|u(x)|)dx \leq \int_{Q^N} \prod_{i=1}^N\int_{-\frac{1}{2}}^{\frac{1}{2}}  B^\frac{1}{N-1}\left(|\partial_j u(x)|\right)dx_j\,dx\\
&\leq  \prod_{i=1}^N  \left(  \int_{Q^{N-1}} \int_{-\frac{1}{2}}^{\frac{1}{2}} B\left(|\partial_j u(x)|\right) dx_j\, dx'\right)^\frac{1}{N-1}= \prod_{i=1}^N  \left( \int_{-\frac{1}{2}}^{\frac{1}{2}} \int_{Q^{N-1}}  B\left(|\partial_j u(x)|\right) dx'\, dx_j\right)^\frac{1}{N-1}\leq\\
&\leq   \left(\int_{Q^N} B\left(|\nabla u(x)|\right) dx\right)^\frac{N}{N-1}= \left(\int_{\Omega} B\left(|\nabla u(x)|\right) dx\right)^\frac{N}{N-1}.\end{split}\]
 
\medskip

{\bf Step 2.}  Let $u\in W_0^{1,B}(\Omega)$. Then by Theorem~\ref{theo:approx} there exists a sequence $\{u_\delta\}_\delta\subset C_0^\infty(\Omega)$ such that \[u_\delta \mconvd u\ \text{ in }\ W^{1,B}(\Omega).\]
Note that $\{u_\delta\}_\delta$ is a Cauchy sequence in the modular topology in $W^{1,B}(\Omega)$ and the  inequality obtained above holds for every $u_\delta$. Moreover, $\{u_\delta\}_\delta$ is also a Cauchy sequence in the modular topology in $L_{B^{N'}}(\Omega)$.

 Due to the modular convergence we get $\nabla u_\delta \to \nabla u$ in measure. Jensen's inequality and properties of modular convergence together with the Lebesgue Dominated Convergence Theorem enable to pass to the limit with $\delta\to 0$ to get the final claim on the small set $\Omega$. 

\medskip

{\bf Step 3.} Suppose that $\Omega$ is arbitrary bounded set containing $0$. It is contained in the cube of~the~edge $D={\rm diam} \Omega$. Then $\wt{u}(x)=u\left(4dx\right)$ has ${\rm supp}\,\wt u\subset \Omega_1\subset \left[-\frac{1}{4},\frac{1}{4}\right]^N.$ We have
\[\left(\int_\Omega B^{N'}(|u|)dx\right)^\frac{1}{N'}=\left((4D)^N\int_{\Omega_1} B^{N'}(|\wt{u}|)\right)^\frac{1}{N'}dx\leq (4D)^\frac{N}{N'} \int_{\Omega_1} B(|\nabla\wt{u}|)dx=\frac{1}{4D} \int_{\Omega} B(4D|\nabla {u}|)dx.\]
 To obtain the estimate on an arbitrary domain we need only to observe that the Lebesgue measure is translation-invariant.
\qed

\section{Main proofs}\label{sec:main-proof}
 {This section is devoted to the proofs of our main results which will be splitted into  different steps.   We start with showing the existence of solutions $u_k$ to problems with regular and bounded data by using the general known theory. In the second and in the third steps we respectively obtain uniform a priori estimates for such weak solutions and almost every where convergence of $u_k$ to some $u$. Step~4 provides that this limit $u$ is the desired approximable solution. Finally  in Step~5 we pass to measure data.}

\medskip

{The first subsection is dedicated to the monotonicity trick which will be instrumental for our arguments.}

\subsection{{Monotonicity trick}} 
Note that the idea of this trick was used in~\cite{Gossez2,MT} in a very general situation. We present it together with the proof for the sake of completeness. 
\begin{proposition}[Monotonicity trick]\label{prop:monotonicity-trick}
Suppose $A$ satisfies conditions  (A1) and (A2).

Assume further that there exists ${\cal A}\in L_{\wt{B}}(\Omega)$ such that for some $v\in W^{1,B}_0(\Omega)\cap L^\infty(\Omega)$ it holds 
\begin{equation}
\label{anty-mon}
\int_\Omega \big(A(x,v,\zeta)-{\cal A}\big)\cdot(\zeta -\nabla v)\,dx\geq0 \quad  \forall \zeta\in L_B(\Omega).
\end{equation}
Then
\[A(x,v,\nabla v)={\cal A}\qquad\text{a.e. in }\ \Omega.\]
\end{proposition}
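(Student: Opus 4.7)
The plan is to execute a Minty--type monotonicity trick at each level set of $|\nabla v|$, which sidesteps the absence of a $\Delta_2$-condition on $B$. Fix $M>0$ and set $E_M := \{x\in\Omega : |\nabla v(x)|\le M\}$; for an arbitrary $\varphi \in L^\infty(\Omega;\rn)$ and $t\in\R$, I would take as test field in \eqref{anty-mon}
\[
\zeta := \nabla v + t\,\varphi\,\mathds{1}_{E_M},
\]
which lies in $L_B(\Omega)$ since $\nabla v \in L_B$ and $t\varphi\,\mathds{1}_{E_M}\in L^\infty(\Omega;\rn)\subset L_B(\Omega)$. Because $\zeta-\nabla v = t\varphi\,\mathds{1}_{E_M}$ vanishes outside $E_M$, the inequality \eqref{anty-mon} reduces to
\[
t\int_{E_M}\big(A(x,v,\nabla v+t\varphi) - {\cal A}\big)\cdot\varphi\,dx \ \geq\ 0 \qquad \text{for every } t\in\R.
\]

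Next I would divide by $t$ and let $t\to 0^+$ and $t\to 0^-$ separately, using standard Lebesgue dominated convergence \emph{inside} $E_M$, where the integrand is uniformly bounded in $t$. Indeed, for $|t|\le 1$, on $E_M$ one has $|\nabla v+t\varphi|\le M+\|\varphi\|_\infty$, so (A2), together with $v\in L^\infty(\Omega)$ (which makes $\widetilde P^{-1}(B(v))$ bounded) and $K\in L_{\widetilde B}\subset L^1(\Omega)$, furnishes a $t$-independent $L^1(E_M)$ majorant of $|A(x,v,\nabla v+t\varphi)\cdot\varphi|$. Combined with the pointwise convergence supplied by (A1), passing to the limit from the two sides and comparing yields
\[
\int_{E_M}\big(A(x,v,\nabla v) - {\cal A}\big)\cdot\varphi\,dx \ =\ 0 \qquad \forall\,\varphi \in L^\infty(\Omega;\rn).
\]

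The same bound with $t=0$ shows $A(\cdot,v,\nabla v)-{\cal A}\in L^1(E_M;\rn)$, so letting $\varphi$ range over $L^\infty(\Omega;\rn)$ forces $A(x,v,\nabla v)={\cal A}$ a.e.\ on $E_M$; sending $M\to\infty$ and using $|\{|\nabla v|=+\infty\}|=0$ completes the argument. The crucial technical point is the truncation to $E_M$: the naive global test $\zeta=\nabla v+t\varphi$ would require an $L^1(\Omega)$ majorant for $\widetilde B^{-1}(B(|\nabla v+t\varphi|))$ uniform in $t$, which without $\Delta_2$ is not available, since $\nabla v\in L_B(\Omega)$ does not, in general, imply $\int_\Omega B(|\nabla v|)\,dx<\infty$.
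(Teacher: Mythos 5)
Your proof is correct, and it runs the same Minty-type argument as the paper's, with a cleaner test field. The paper fixes two levels $j<i$ and tests with $\zeta=(\nabla v)\mathds{1}_{\Omega_i}+h\vec w\,\mathds{1}_{\Omega_j}$, which zeros out $\nabla v$ outside $\Omega_i$ and hence creates an extra integral over $\Omega\setminus\Omega_i$ that must be shown to vanish as $i\to\infty$; your choice $\zeta=\nabla v+t\varphi\,\mathds{1}_{E_M}$ leaves $\nabla v$ untouched outside $E_M$, so that $\zeta-\nabla v$ is supported in $E_M$, that boundary term never appears, and a single truncation level suffices. The endgame also differs cosmetically: the paper keeps $h>0$ throughout and then chooses $\vec w=-\sign(A(x,v,\nabla v)-{\cal A})$ to force the integral over $\Omega_j$ to be nonpositive, whereas you use both signs of $t$ to obtain $\int_{E_M}(A(x,v,\nabla v)-{\cal A})\cdot\varphi\,dx=0$ for every bounded $\varphi$; these are interchangeable ways to finish. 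The technical core is identical in both: on the bounded-gradient set, (A2) together with $v\in L^\infty(\Omega)$ and $|\nabla v|\le M$ gives a $t$-independent $L^1$ majorant, so (A1) plus dominated/Vitali convergence justifies the passage $t\to 0$, and sending $M\to\infty$ completes the proof. Your closing remark on why the truncation to $E_M$ is needed, namely that without $\Delta_2$ one cannot produce an integrable majorant for $\wt B^{-1}(B(|\nabla v+t\varphi|))$ from $\nabla v\in L_B$ alone, correctly identifies the obstruction that the localization is designed to circumvent.
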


\begin{proof} Let us define\begin{equation*}
\Omega_m=\{x\in\Omega:\ |\nabla v|\leq m\}.
\end{equation*} 
 Fix arbitrary $0<j<i$ and notice that $\Omega_j\subset\Omega_i$.

We consider~\eqref{anty-mon} with
\[\zeta=(\nabla v)\mathds{1}_{\Omega_i}+h\vec{w}\mathds{1}_{\Omega_j},\]
where $h>0$ and $\vec{w}\in L^\infty(\Omega;\rn)$, namely
\begin{equation*}
\int_\Omega\Big( A\big(x,v,(\nabla v)\mathds{1}_{\Omega_i}+h\vec{w}\mathds{1}_{\Omega_j}\big)-{\cal A} \Big)\cdot\Big( (\nabla v)\mathds{1}_{\Omega_i}+h\vec{w}\mathds{1}_{\Omega_j}-\nabla v\Big)\, dx\geq  0.
\end{equation*} 
Notice that it is equivalent to
\begin{equation}
\label{A-po-mon}\begin{split}& -\int_{\Omega\setminus\Omega_i}   (A(x,v,0) - {\cal A})\cdot\nabla v\,dx  +h\int_{ \Omega_j} (A(x,v,\nabla v+h\vec{w})-{\cal A} )\cdot \vec{w}\,  dx\geq  0.\end{split}
\end{equation} 
The first {integral} above {tends } to zero when $i\to\infty$. {Indeed $A(x,v,0)=0$, ${\cal A}\in L_{\tilde B}, \nabla v\in L_{ B}$ and therefore H\"older's inequality gives the boundedness of the integrands in $L^1$. The convergence to zero follows taking}     into account {the} shrinking {domains} of integration.

{It follows that}
\begin{equation*} h\int_{ \Omega_j} \big(A(x,v,\nabla v +h\vec{w})-{\cal A} \big)\cdot \vec{w}\,  dx\geq  0
\end{equation*}
and obviously that
\begin{equation}\label{greater}
 \int_{ \Omega_j} \big(A(x,v,\nabla v +h\vec{w})-{\cal A} \big)\cdot \vec{w}\,  dx\geq  0.
\end{equation}
Note that $\nabla v +h\vec{w}\to \nabla v $ in $L^\infty(\Omega_j)$ as $h\to 0$ and thus
\[A(x,v,\nabla v +h\vec{w})\xrightarrow[h\to 0]{}A(x,v,\nabla v )\quad\text{a.e. in}\quad \Omega_j.\] 
Moreover,  $A(x,v,\nabla v +h\vec{w})$ is bounded on $\Omega_j$. Let $\bar{c}= h ||\vec{w}||_{\infty}$ . Using~(A2) and Jensen's inequality we have that in $\Omega_j$
\[\begin{split} \wt{B}\left(d|A(x,v,\nabla v +h\vec{w})|\right)&\leq   \wt{B}\left(\frac{1}{3} \big|K(x)+ \wt{P}^{-1}(B(|v|))+ \wt{B}^{-1}(B(\bar{c}))\big|\right)\\ 
&\leq   \frac{1}{3}\wt{B}\left( K(x)\right)+\frac{1}{3}\wt{B}\left( \wt{P}^{-1}(B(\|v\|_{L^\infty(\Omega_j)}))\right)+\frac{1}{3}\wt{B}\left( \wt{B}^{-1}(B(\bar{c}))\right)\in L^1(\Omega_j).
\end{split}\]
 Hence, we have uniform boundedness of $\left(\wt{B}\big(A(x,v,\nabla v +h\vec{w})\big)\right)_h$ in $L^1(\Omega)$ and {by Theorem \ref{theo:delaVP} we deduce the uniform}   integrability of $\left(A(x,v,\nabla v +h\vec{w})\right)_h$. {Since} $|\Omega_j|<\infty$ and (A1) implies continuity with respect to the last variable,  we can apply {Theorem \ref{theo:VitConv}} to get
\[A(x,v,\nabla v +h\vec{w})\xrightarrow[h\to 0]{}A(x,v,\nabla v)\quad\text{in}\quad L^1(\Omega_j;\rn).\] 
Thus
\begin{equation*} \int_{ \Omega_j} (A(x,v,\nabla v +h\vec{w})-{\cal A} )\cdot \vec{w} \, dx\xrightarrow[h\to 0]{} \int_{ \Omega_j} (A(x,v,\nabla v)-{\cal A} )\cdot \vec{w} \, dx.
\end{equation*}
{Taking into account \eqref{greater}, it follows that }
\begin{equation*}  \int_{ \Omega_j} (A(x,v,\nabla v)-{\cal A} )\cdot \vec{w}\,  dx\geq 0,
\end{equation*}
for any $\vec{w}\in L^\infty(\Omega;\rn)$.

{If we consider}
\[\vec{w}=\left\{\begin{array}{ll}-\frac{A(x,v,\nabla v)-{\cal A} }{|A(x,v,\nabla v)-{\cal A} |}&\ \text{if}\quad A(x,v,\nabla v)-{\cal A} \neq 0,\\
0&\ \text{if}\quad A(x,v,\nabla v)={\cal A} ,
\end{array}\right.\]
we obtain 
\begin{equation*}  \int_{ \Omega_j} |A(x,v,\nabla v)-{\cal A} |\, dx\leq 0,
\end{equation*}
{and} hence \[A(x,v,\nabla v)={\cal A} \quad \text{a.e.}\quad\text{in}\quad \Omega_j.\]
Since $j$ is arbitrary, we have the equality a.e. in $\Omega$ and~\eqref{anty-mon} is satisfied.\end{proof}\qed

\subsection{Proof of Theorem~\ref{theo:main-mu}}

\noindent {\bf Step 1. Existence of $u_k$ solving approximate problem}

\medskip

\noindent Let us consider $\{f_k\}_k\subset C_0^\infty(\Omega)$, such that
\begin{equation}\label{lim-fk-to-f}
f_k\to f\qquad\text{in}\quad L^1(\Omega)\qquad\text{and}\qquad f_k(x)\leq 2  f(x)\quad \text{a.e. in }\ \Omega.
\end{equation}
We are going to show the existence of a weak solution $u_k$ to the problem 
\begin{equation}\label{prob:trunc}
\left\{\begin{array}{cl}
-\dv A(x,u_k,\nabla u_k)= f_k &\qquad \mathrm{ in}\qquad  \Omega,\\
u_k(x)=0 &\qquad \mathrm{  on}\qquad \partial\Omega,
\end{array}\right.
\end{equation}
Recall that $u_k\in W^{1,B}_0$ is a weak solution to the problem \eqref{intro:ell:f} if 
$$\int_{\Omega}A(x,u_k, \nabla u_k) \nabla \varphi\, dx=\int_{\Omega}f_k\varphi\,dx$$
for every  $\varphi\in W^{1,B}_0$.
Since (A1)-(A3) hold, by Theorem 4.3 in \cite{MT} we have that  the operator is pseudomonotone and therefore, by Theorem 5.1 in \cite{MT}, we get the existence of a distributional solution.

Then, due to the modular approximation (see Theorem~\ref{theo:approx}),  we obtain the existence of $u_k\in W_0^{1,B}(\Omega)$, such that\begin{equation}
\label{weak:u-k}
\int_\Omega A(x,u_k, \nabla u_k)\cdot \nabla \vp\,dx=\lim_{\delta\to 0}\int_\Omega A(x,u_k, \nabla u_k)\cdot \nabla \vp_\delta\,dx=\lim_{\delta\to 0}\int_\Omega f_k\,\vp_\delta\,dx=\int_\Omega f_k\,\vp\,dx
\end{equation}
for every $\vp\in W_0^{1,B}(\Omega)$. 

\medskip

\noindent{\bf Step 2. A priori estimates}

\medskip

 \noindent In order to obtain uniform integrability of sequences $\{A(x,{T_t u_k,}\nabla T_t u_k)\}_{k}$ and $\{\nabla T_t u_k \}_{k}$ we will prove the two following a priori estimates. For $u_k$ being a weak solution to~\eqref{prob:trunc}  and $f\in L^1(\Omega)$, we will have  for any $t>0$
\begin{eqnarray}
\int_\Omega B(|\nabla T_t u_k|)\,dx&\leq& c_0 t \|f\|_{L^1(\Omega)},\label{Bapriori}\\
\int_\Omega \widetilde{B}\left(\frac{1}{d} |A(x,T_t u_k,\nabla T_t u_k)|\right)\,dx&\leq& c_0\,t \|f\|_{L^1(\Omega)}+c_1\,B_s(t)+c_2,\label{B*apriori}
\end{eqnarray}
 where $B_s<<B$ and the constants $c_0, c_1, c_2$  depend only on the growth condition~(A2). More precisely, $c_0=2/d_0$, $c_1=c(B,P,\Omega)$, $c_2=c(K)$.

Due to~\eqref{weak:u-k},  we get
\begin{equation}
\label{1sttest}
\int_\Omega A(x, T_t u_k, \nabla T_t u_k )\nabla T_t u_k\,dx= 
\int_\Omega A(x, u_k, \nabla u_k) \nabla T_t u_k dx 
= \int_\Omega f_k  T_t u_k\,dx\leq 2 t \|f\|_{L^1(\Omega)}.
\end{equation}
{Observe that we used that $A(x,u_k, \nabla u_k) \in L_{\wt{B}}$ and estimate at \eqref{lim-fk-to-f}.}
{Estimate  \eqref{Bapriori} immediately follows by using \eqref{bound} 
\begin{eqnarray}\label{first}
d_0\int_\Omega B( |\nabla T_t u_k|))\,dx\leq \int_\Omega A(x, T_t u_k, \nabla T_t u_k)\nabla T_t u_k\,dx\leq 2 t \|f\|_{L^1(\Omega)}.
\end{eqnarray}
On the other hand, if we use   \eqref{upper bound}, Jensen's inequality and \eqref{Bapriori}, we have
\begin{eqnarray*}
\int_\Omega \widetilde{B}(d|A(x,T_t u_k,\nabla T_t u_k|))\,dx&\leq&
\int_\Omega \widetilde{B}\left(\frac{1}{3}\left[\widetilde{B}^{-1}(B(|\nabla T_t u_k|))+ \widetilde{P}^{-1}(B(|T_t u_k|))+K(x)\right]\right)\,dx\cr\cr
 &\leq&
\frac{1}{3}\int_\Omega \widetilde{B}\left(\widetilde{B}^{-1}(B(|\nabla T_t u_k|))\right)+ \widetilde{B}\left(\widetilde{P}^{-1}(B(|T_t u_k|))\right)+\widetilde{B}\left(K(x)\right)\,dx\cr\cr 
&\leq&\frac{1}{3}
\int_\Omega B(|\nabla T_t u_k|)+c(B,P)B(t)+\widetilde{B}\left(K(x)\right)\,dx\cr\cr &\leq&
c_0 \,t ||f||_{L^1(\Omega)}+c(B,P,\Omega)B_s(t)+c(K).
\end{eqnarray*}
Note that we have used that the assumption  $P<<B$ is equivalent to $\widetilde{B}<< \widetilde{P}$, estimate \eqref{first} and that $K\in E_{\widetilde B}$.}

\bigskip

\noindent{\bf Step 3. Convergence $u_k\xrightarrow{a.e} u$}

\medskip

\noindent The  a priori estimates \eqref{Bapriori}, {the Banach-Alaoglu theorem combined with Dunford-Pettis theorem,} and the fact that $B$ is an $N$-function  imply that for each $t{>0}$ the sequence {$\{T_t u_k\}_k$} is bounded in $W^{1,1}_0(\Omega)$. Moreover, the Poincar\'e inequality from Corollary~\ref{prop:Poincare} and estimate \eqref{Bapriori} ensure that $\{T_t u_k\}_k$ is bounded in $W^{1,B}_0(\Omega)$. Hence, the embedding imply that there exists a function $u$ such that
 
\begin{equation}\begin{split}\label{conv:basic}
T_t u_k&\xrightarrow[k\to\infty]{}  T_t u\quad \text{strongly in } L^1(\Omega),\\
T_t u_k&\xrightarrow[k\to\infty]{}  T_t u\quad \text{a.e.}\\
\nabla T_t(u_k)&\xrightharpoonup[k\to\infty]{ }  \nabla T_t u\quad \text{weakly in } L^1(\Omega),\\
\nabla T_t u_k&\xrightharpoonup[k\to\infty]{*}  \nabla T_t u\quad \text{weakly-$*$ in } L^B(\Omega).\end{split}
\end{equation}
 Since truncated functions converge a.e., for every $t$ fixed and for every $\epsilon$ there exists $\tau$ such that for $k,m$ sufficiently large
\begin{equation}\label{doptro}
| \{ |T_t u_k - T_t u_m| > \tau \} | \leq \epsilon.
\end{equation}
Now observe that for given $t, \tau >0$ we have
$$
| \{ |u_k - u_m| > \tau \} | \leq | \{ |u_k | > t \} | +  | \{ |u_m | > t \} | +  | \{ |T_t u_k - T_t u_m| > \tau \} |
$$
for $k,m \in \mathbb{N}$.

\noindent On the other hand, since $B$ is increasing we get for every $l>0$ 
$$
|\{|u_k|\geq l\}|=|\{|T_l(u_k)|= l\}|=|\{|T_l(u_k)|\geq l\}|=|\{B(c_1|T_l(u_k)|)\geq B(c_1l)\}|,
$$
therefore
\begin{equation}\label{uk>l}
\begin{split}
|\{|u_k|\geq l\}|&\leq \int_{\Omega} \frac{B(|c_1T_l(u_k)|)}{B(c_1l)} dx\leq \frac{c(N,\Omega)}{B(l)}  \int_\Omega B ( |\nabla T_l(u_k)|)dx \\
&\leq 
 \frac{C( N,\Omega)}{B(l)}  \cdot l\,\|f\|_{L^1(\Omega)}\\
 &\leq 
 C(f,B,N,\Omega)\left(\frac{l}{B(l)}\right)\xrightarrow[l\to\infty]{}0. 
\end{split}
\end{equation}
In the above estimates we apply (respectively) the Chebyshev inequality,  Corollary~\ref{prop:Poincare}, the a priori estimate~\eqref{Bapriori}. The limit results from the superlinear growth in the infinity of $N$-function $B$.
 
Therefore, using \eqref{uk>l}, for every $\epsilon$ we can choose $t$ so large that 
$$
 | \{ |u_k | > t \} | < \epsilon \quad \text{and} \quad  | \{ |u_m | > t \} | < \epsilon
$$
and then, recalling also \eqref{doptro}, we obtain  that $u_k$ is a Cauchy sequence in measure. It follows that , up to a subsequence, 
\begin{equation}
\label{conv:ae:uk-to-u}
 u_k \xrightarrow[k\to\infty]{}  u \quad \text{a.e. in }  \Omega ,
\end{equation}
that is $u$ is an approximable solution to our problem.
\medskip

\noindent{{\bf Step 4. Convergence $A(u,T_t u_k ,\nabla T_{t}(u_k)))\xrightharpoonup{*} A(x,T_t u ,\nabla T_tu)$ in $L_{\widetilde{B}}$}}

\medskip

\noindent Since by~\eqref{B*apriori} we have that there exists  $\cA_t\in L_{\widetilde{B}}(\Omega )$ such that
\begin{equation}
\label{a-conv-ca}
A(x,T_t u_k ,\nabla T_{t}(u_k))\xrightharpoonup{*} \cA_t \quad  \text{weakly}-*\ \text{in}\ L_{\widetilde{B}}(\Omega),
\end{equation}
our {first} aim is to prove that
\begin{equation}
\label{lim<A'} \limsup_{k\to\infty} 
\int_\Omega A(x,T_t u_k, \nabla T_{t}u_k) \nabla T_t u_k\, dx=\int_\Omega {\cal A}_t\cdot \nabla T_t u \,dx,
\end{equation}
which will be instrumental in order to use the monotonicity trick.{ By Theorem~\ref{theo:approx} we can take an approximating sequence $(T_t u)_{\delta} $   of smooth functions   such that  $\nabla (T_t u)_{\delta}\xrightarrow[\delta\to 0]{mod} \nabla T_t(u)$ in $L_B$ {and write}}
\begin{equation*}
\int_\Omega A(x,T_t u_k, \nabla T_{t}u_k) \nabla T_t u_k dx 
=\int_\Omega A(x,T_t u_k, \nabla T_{t}u_k) \nabla (T_t u)_\delta dx + 
\int_\Omega A(x,T_t u_k, \nabla T_{t}u_k) \big( \nabla T_t u_k - \nabla (T_t u)_\delta \big) dx
\end{equation*}
Therefore, if  we take into account Lemma~\ref{modw} and that \eqref{a-conv-ca}  holds, in order to get~\eqref{lim<A'}, it suffices to show that
\begin{equation}
\label{limsup2}
\lim_{\delta\to 0}\limsup_{k\to\infty}
\int_\Omega A(x,T_{t} u_k, \nabla T_{t} u_k) \nabla \left[T_t u_k-(T_t u)_\delta\right]dx= 0.
\end{equation}
Let us define the cut-off function $\psi_l:\R\to\R$ by 
\begin{equation}\label{psil}
\psi_l(r):= 
\min\{(l+1-|r|)^+,1\}.
\end{equation} 
Observe that since $A(x,z,0) = 0$, { $A(x,T_{t} u_k, \nabla T_{t} u_k)$ is not zero provided $|u_k|\le t$. Then for $l>t$, it is $\psi_l(u_k)=1$ and hence}
\begin{equation*}\begin{split}
\int_{\Omega} A(x,T_{t} u_k, \nabla T_{t} u_k) & \nabla \left[T_t u_k-(T_t u)_\delta\right]dx 
=
\int_\Omega A(x,T_{t} u_k, \nabla T_{t} u_k)  \nabla \left[T_t u_k-(T_t u)_\delta\right] \psi_l(u_k) \,dx  \cr\cr
 =& \int_\Omega A(x,u_k, \nabla u_k)\nabla \left[T_t u_k-(T_t u)_\delta\right]  \psi_l(u_k) \,dx \cr\cr
& - \int_\Omega \big( A(x, u_k, \nabla u_k)- A(x,T_{t} u_k, \nabla T_{t} u_k) \big) \nabla \left[T_t u_k-(T_t u)_\delta\right]  \psi_l(u_k)\,dx  \cr\cr
 =&  \int_\Omega A(x,u_k, \nabla u_k) \nabla \big( \psi_l(u_k) (T_t u_k-(T_t u)_\delta) \big) dx \cr\cr
&- \int_\Omega \big( A(x,u_k, \nabla u_k) \nabla \psi_l(u_k) \big) (T_t u_k-(T_t u)_\delta) dx\cr\cr
&- \int_\Omega \big( A(x, u_k, \nabla u_k)- A(x,T_{t} u_k, \nabla T_{t} u_k) \big)  \nabla \left[T_t u_k-(T_t u)_\delta\right]\psi_l(u_k)\,dx \cr\cr
=&I_1+I_2+I_3
\end{split}\end{equation*}
{In order to show \eqref{limsup2}, it will be enough to prove that each of the integrals in the right hand side of last equality goes to zero as $\delta \to 0$ and $k\to \infty$.}

{Note  that  $\vp=\psi_l(u_k)(T_t u_k-(T_t u)_\delta)$} is a legitimate test function for the equation~\eqref{weak:u-k} because of \eqref{1sttest}). It follows that for $I_1$ we have
\begin{equation}
\label{cf.C5}
{I_1=}\int_\Omega A(x,u_k, \nabla u_k)\nabla \big( \psi_l(u_k)(T_t u_k-(T_t u)_\delta)\big) dx=\int_\Omega f_k \psi_l(u_k)(T_t u_k-(T_t u)_\delta)dx
\end{equation}
{and, for every $l$ fixed, we have}
\[
\lim_{\delta\to 0}\limsup_{k\to\infty}I_1=\lim_{\delta\to 0}\limsup_{k\to\infty} \int_\Omega f_k\psi_l(u_k)(T_t u_k-(T_t u)_\delta)dx=0.
\]
To this end, observe that
\begin{equation*}
\begin{split}
\lim_{\delta\to 0}  \limsup_{k\to\infty}&\left|\int_\Omega f_k\psi_l(u_k)(T_t(u_k)-(T_t(u))_\delta)dx\right|\\
& \leq \lim_{\delta\to 0} \limsup_{k\to\infty}\int_\Omega |f_k-f | \,|T_t u_k-(T_t u)_\delta|dx+\lim_{\delta\to 0}\limsup_{k\to\infty}  \int_\Omega |f|\,|T_t u_k -(T_t u)_\delta|dx \\ 
&\leq \lim_{\delta\to 0} \int_\Omega |f|\,|T_t u -(T_t u)_\delta|dx=0.
\end{split}
\end{equation*}
Note that the first limit in the second line vanishes since $u_k \to u$ a.e., $f_k \to f$ in $L^1(\Omega)$ and $(T_t u)_\delta \to T_t u$ modularly (so in $L^1$). On the other hand,  the last equality holds thanks to the Lebesgue Dominated Convergence Theorem that is  legitimate to be used since $(T_t u)_\delta$ are uniformly bounded.

\noindent It follows that
\begin{equation}\label{qui}
\lim_{\delta\to 0}  \limsup_{k\to\infty}\int_\Omega A(x,u_k, \nabla u_k)\nabla \big( \psi_l(u_k)(T_t u_k-(T_t u)_\delta)\big) dx=0
\end{equation} 

\noindent To deal with $I_2$ we need to show that 
\begin{align}\label{other}
\lim_{l \to \infty} \lim_{\delta\to 0}\limsup_{k\to\infty}I_2=\lim_{l \to \infty} \lim_{\delta\to 0}\limsup_{k\to\infty} 
\int_\Omega \big( A(x,u_k, \nabla u_k) \nabla \psi_l(u_k) \big) (T_t u_k-(T_t u)_\delta) dx = 0 
\end{align}

\noindent By the definition of $\psi_l$ we first obtain that 
\begin{eqnarray} \label{II:1+2}
&&\lim_{l \to \infty} \lim_{\delta\to 0}\limsup_{k\to\infty}
\left| \int_\Omega \big( A(x,u_k, \nabla u_k) \nabla \psi_l(u_k) \big) (T_t u_k-(T_t u)_\delta) dx \right|
\leq \cr\cr
&&\lim_{l\to\infty} \lim_{\delta \to 0} \limsup_{k\to\infty}
\int_{\{l<|u_k|<l+1\}} A(x,u_k,\nabla u_k)\nabla u_k |T_t u_k-(T_t u)_\delta | dx 
\end{eqnarray}
Since $(T_tu)_{\delta}$ is uniformly bounded and for $l>t$, it is $|u_k|>t$ on the set $\{l<|u_k|<l+1\}$, the integral in the right hand side of previous inequality can be estimated by
\begin{eqnarray}
c \lim_{l\to\infty} \limsup_{k\to\infty} 
\int_{\{l<|u_k|<l+1\}} A(x,u_k, \nabla u_k)\nabla u_k dx =\!\!&c&\!\! \lim_{l\to\infty} \limsup_{k\to\infty} 
\int_\Omega A(x, u_k, \nabla u_k)(\nabla T_{l+1} u_k - \nabla T_l u_k)\, dx=\cr\cr
\!\!&c&\!\! \lim_{l\to\infty} \limsup_{k\to\infty}\int_\Omega f_k( T_{l+1} u_k -T_l u_k)\,dx
\end{eqnarray}
where we also used that $u_k$ is a solution of \eqref{weak:u-k}. Then, recalling  the pointwise inequality at   \eqref{lim-fk-to-f}, the fact that $f\in L^1$ and that ~\eqref{conv:ae:uk-to-u} holds, we obtain from previous calculations that
\begin{eqnarray}
\lim_{l \to \infty} \lim_{\delta\to 0}\limsup_{k\to\infty}I_2
\le\!\!&c&\!\! \lim_{l\to\infty} \limsup_{k\to\infty}
\int_{\{l<|u_k|<l+1\}} |f_k|\, dx \leq \cr\cr
\!\!&c&\!\! \lim_{l\to\infty} \limsup_{k\to\infty}
\int_{\{ l \leq |u_k|\}} |f_k|\, dx = 0.
\end{eqnarray}

Now we concentrate on $I_3$ and show that
\begin{eqnarray}\label{estI_3}
\lim_{l \to \infty} \lim_{\delta\to 0}\limsup_{k\to\infty}I_3=\lim_{l \to \infty} \lim_{\delta\to 0}\limsup_{k\to\infty} 
 \int_\Omega \big( A(x, u_k, \nabla u_k)- A(x,T_{t} u_k, \nabla T_{t} u_k) \big)  \nabla \left[T_t u_k-(T_t u)_\delta\right]\psi_l(u_k)\,dx=0
\end{eqnarray}
Recalling that $A(x,z,0)=0$ and the definition of the function $\psi_l$ {at} \eqref{psil}, , we have for $l>t$
\begin{align*}
\int_\Omega &\big( A(x, u_k, \nabla u_k)- A(x,T_{t} u_k, \nabla T_{t} u_k) \big)  \nabla \left[T_t u_k-(T_t u)_\delta\right]\psi_l(u_k)\,dx \\
& \quad
= \int_{\{ t < |u_k| < l+1 \}} \big( A(x, T_{l+1} u_k, \nabla T_{l+1} u_k)- A(x,T_{t} u_k, \nabla T_{t} u_k) \big)  \nabla \left[T_t u_k-(T_t u)_\delta\right]\psi_l(u_k)\,dx \\
& \quad
{= - \int_{\{ t < |u_k| < l+1 \}} A(x, T_{l+1} u_k, \nabla T_{l+1} u_k) \nabla (T_t u)_\delta \, \psi_l(u_k)  \, dx}\\
& \quad
= - \int_\Omega A(x, T_{l+1} u_k, \nabla T_{l+1} u_k) \nabla (T_t u)_\delta \, \psi_l(u_k)  \mathds{1}_{\{ t < |u_k| < l+1 \}}\, dx
 \end{align*}
 Since 
 $$
 A(x,T_t u_k ,\nabla T_{t}(u_k))\xrightharpoonup{*} \cA_t \quad  \text{weakly}-*\ \text{in}\ L_{\widetilde{B}}(\Omega) \qquad \text{as} \quad k\to \infty,
 $$
 we have
 $$
 A(x,T_{l+1} u_k ,\nabla T_{l+1}(u_k))\nabla (T_t u)_\delta \to  \cA_{l+1} \,{\nabla}(T_t u)_\delta
 \quad \text{in $L^1$}
 $$
 Therefore we have
 \begin{align*}
 \lim_{l \to \infty} &\lim_{\delta\to 0}\limsup_{k\to\infty} 
\left| \int_\Omega \big( A(x, u_k, \nabla u_k)- A(x,T_{t} u_k, \nabla T_{t} u_k) \big)  \nabla \left[T_t u_k-(T_t u)_\delta\right]\psi_l(u_k)\,dx \right|  \\
 & \quad = 
 \lim_{l \to \infty} \lim_{\delta\to 0}\limsup_{k\to\infty} 
 \left| \int_\Omega A(x, T_{l+1} u_k, \nabla T_{l+1} u_k) \nabla (T_t u)_\delta \, \psi_l(u_k)  \mathds{1}_{\{ t < |u_k| < l+1 \}}\, dx \right| \\
 & \quad = 
 \lim_{l \to \infty} \lim_{\delta\to 0}
 \left| \int_\Omega \cA_{l+1} \nabla (T_t u)_\delta \, \psi_l(u)  \mathds{1}_{\{ t < |u| < l+1 \}}\, dx \right| \\
 & \quad \leq
 \lim_{l \to \infty} 
 \int_\Omega |\cA_{l+1}|\,\, | \nabla T_t u|  \mathds{1}_{\{ t < |u| \}}\, dx \\
 &=0.
 \end{align*}
where we used Lemma \ref{lem:TM1} and Lemma~\ref{lem:ae}.

 \noindent Combining \eqref{qui}, \eqref{other} and \eqref{estI_3}, we infer that \eqref{limsup2} is true and therefore \eqref{lim<A'} holds.

\medskip

Now, using the monotonicity trick, we identify  the limit $A_t$. More precisely, now our aim is  to show that in~\eqref{a-conv-ca}
\begin{equation*}
\cA_t(x)=A(x,T_t u(x), \nabla T_t u(x))\qquad\text{a.e. in }\Omega.
\end{equation*}
Monotonicity of~$A$ results in
$$
\int_\Omega A(x,T_t u_k, \nabla T_{t} u_k ) \nabla T_t u_k \,dx
\geq 
\int_\Omega A(x, T_t u_k, \nabla T_{t} u_k ) \eta \, dx+\int_\Omega A(x,T_t u_k, \eta)  (\nabla T_t u_k-\eta)\ dx
$$
for any $\eta\in \rn$.  Taking {the} upper limit with ${k\to\infty}$ above, we have in the left hand side 
\begin{equation*}
\int_\Omega {\cal A}_t\cdot \nabla T_t u \,dx
\end{equation*} 
and for the first term in the right hand side 
$$ \int_\Omega {\cal A}_t\cdot \eta\, dx$$
respectively thanks to  \eqref{lim<A'} and  \eqref{a-conv-ca}. 

\noindent To justify that  
$$
A(x, T_t u_k, \eta) \to A(x,T_t u, \eta) \quad \text{strongly in $L_{\tilde{B}}$}
$$
we recall that $P<<B$, $A$ is continuous with respect to the second variable, and we have almost everywhere convergence of $ T_t u_k$. Altogether, we infer uniform boundedness of $\{\wt{P}(|A(x,T_t u_k,\eta)|/\lambda)\}_t$ in $L^1$ for arbitrary $\lambda>0$. Further, via Theorem~\ref{theo:delaVP}, we get uniform integrability of $\{\wt{B}(|A(x,T_t u_k,\eta)|/\lambda)\}_k$ in $L^1$ and   due to Lemma~\ref{lem:modular-norm} we get the desired limit.

\medskip

 Then, recalling that,  by \eqref{conv:basic}, we have
\begin{equation}\label{ttt} \nabla T_t u_k\xrightharpoonup[k\to\infty]{*}  \nabla T_t u\quad \text{weakly-$*$ in } L_B(\Omega),
\end{equation}
thank to Lemma~\ref{lem:weak-strong-conv} with $
A(x, T_t u_k, \eta)\in E_{\wt{B}}$ and to the continuity of  $A$,  we get  
\[\lim_{k\to\infty}\int_\Omega A(x,T_t u_k, \eta)  (\nabla T_t u_k-\eta)\, dx=\int_\Omega A(x,T_t u, \eta)  (\nabla T_t u-\eta)\, dx.\]
In conclusion, we have
\begin{equation*}
\int_\Omega {\cal A}_t\cdot \nabla T_t u \,dx\geq \int_\Omega {\cal A}_t\cdot \eta\, dx+\int_\Omega A(x,T_t u, \eta)  (\nabla T_t u-\eta)\, dx\end{equation*} 
 that it is equivalent to
\begin{equation}
\label{Ak-mon} \int_\Omega( {\cal A}_t- A(x,T_t u ,\eta) )( \nabla T_t(u)-\eta)\, dx\geq  0.
\end{equation} 
Then the monotonicity trick  (see Proposition \ref{prop:monotonicity-trick}) implies
\begin{equation} \label{lim=ca}
 A(x,T_t u , \nabla T_tu)= {\cal A}_t  \quad {\text a.e.}
\end{equation}
The convergence of the left-hand side of~\eqref{prob:trunc} follows from the facts that $u\in W^{1,B}(\Omega)$, $\nabla u$ can be understood as the generalized gradient in the sense of~\eqref{def:Zu}, and \eqref{lim=ca}, whereas the right-hand side of~\eqref{prob:trunc} converges due to~\eqref{lim-fk-to-f}. 

\medskip

\noindent{\bf Step 5. Measure data problem}

To study measure-data problems let us consider $f_k\in L^1(\Omega)\cap (W^{1,B}(\Omega))'$, given by \[f_k(x)=\int_\Omega k^n\varrho(|y-x|k)\,d\mu(y)\qquad x\in\Omega,\]
where $\varrho:\rn\to[0,\infty)$ is a standard mollifier (i.e. smooth function compactly supported in the unit ball with $\|\varrho\|_{L^1(\Omega)}=1$). Note that
\[\|f_k\|_{L^1(\Omega)}\leq 2\|\mu\|(\Omega)\]
and
\[\lim_{k\to\infty}\int_\Omega\vp\,f_k\,dx=\int_\Omega\vp\,d\mu
\]
for every $\vp\in C_c(\Omega).$ Then, for the problem~\eqref{prob:trunc} under such a choice of $f_k$ the above proof still hold.\qed

\bigskip

\subsection{Uniqueness for $L^1$-data problem with strongly monotone operator}

\bigskip

\noindent Proof of Theorem~\ref{theo:main-f}. To complete the proof of Theorem~\ref{theo:main-f} having Theorem~\ref{theo:main-mu} it suffices to infer uniqueness.  We suppose $u$ and $\bu$ are approximable solutions to problem~\eqref{eq:main:mu} with the same $L^1$-data but  which are obtained as limits of different approximate problems and prove that they have to be equal almost everywhere. By Definition~\ref{def:as:f} there exist sequences $\{f_k\}$ and $\{\bar{f_k}\}$ in $L^1(\Omega)\cap (W^{1,B}(\Omega))'$, such that $f_k\to f$ and $\bar{f_k}\to f$ in $L^1(\Omega)$ and weak solutions $u_k$ to~\eqref{prob:trunc} and $\bu_k$ to  \begin{equation}\label{prob:trunc-b}
\left\{\begin{array}{cl}
-\dv A(x,\bu_k,\nabla \bu_k)= \bar{f_k} &\qquad \mathrm{ in}\qquad  \Omega,\\
\bu_k(x)=0 &\qquad \mathrm{  on}\qquad \partial\Omega,
\end{array}\right.
\end{equation}  such that for a.e. in $\Omega$ we have both $u_k\to u$ and $\bu_k\to \bu$. We fix arbitrary $t>0$, use $\vp=T_t(u_k-\bu_k)$  as a test function in~\eqref{prob:trunc} and~\eqref{prob:trunc-b}, and subtract the equations to obtain\begin{equation}
\label{diff:u-bu}
\int_{\{|u_k-\bu_k|\leq t\}}(A(x,u_k,\nabla u_k)-A(x,\bu_k,\nabla \bu_k))\cdot( \nabla u_k-\nabla \bu_k)\,dx=\int_\Omega (f_k-\bar{f_k})T_t(u_k-\bu_k)\,dx\quad\text{for every }k\in\N.
\end{equation}
The right-hand side above tends to $0$, because $|T_t(u_k-\bu_k)|\leq t$ and for $k\to\infty$ we have $ f_k-\bar{f_k}\to 0$ in $L^1(\Omega)$. The left-hand side is convergent due to Step 4, (A3)$_s$, and Fatou's Lemma. We get
\[
\int_{\{|u -\bu |\leq t\}}(A(x,u,\nabla u )-A(x,\bu,\nabla \bu ))\cdot( \nabla u -\nabla \bu )\,dx=0.\]
Consequently,  $\nabla u =\nabla \bu$ a.e. in $\{|u -\bu |\leq t\}$ for every $t>0$, and so \begin{equation}
\label{nau=nabu}\nabla u =\nabla \bu\quad\text{ a.e. in }\Omega.
\end{equation} 
Then, using the  Poincar\'e inequality (Corollary~\ref{prop:Poincare}) with $T_r(u-T_t(\bu))$, for a fixed $r>0$, in place of $u$, we get
\[ \int_\Omega B (c_1|T_r(u-T_t(\bu))|)\,dx \leq c_2 \int_{\{|u-t|\leq r\}}B(|\nabla u|)\,dx.\]
We will prove that the left-hand side above tends to zero with $t\to\infty$. 

\noindent By using (A2) we have that  
\[ \begin{split}  \int_{\{|u-t|\leq r\}}B(|\nabla u|)\,dx &\leq {c_2}   \liminf_{k\to\infty} \int_{\{|u_k-t|\leq r\}} A(x,u_k,\nabla u_k )\nabla u_k \,dx\\
 &= {c_2}   \liminf_{k\to\infty} \int_{\{|u_k-t|\leq r\}} A(x,u_k,\nabla  u_k)\nabla T_{t+r} (u_k)  \,dx\\
 &= 2r{c_2}   \liminf_{k\to\infty} \int_{\Omega} A(x,u_k,\nabla  u_k)\nabla (1-\psi_{t,r} (u_k)) \,dx.\end{split}\]
where we introduced the notation
\[\psi_{t,r}(s)=\frac{1}{2r}\min\{1, t+r-|s|\}.\]

 Now, using weak formulation of the problem, we get
 \[ \begin{split} \int_\Omega B (c_1|T_r(u-T_t(\bu))|)\,dx &\leq c \int_{\{t-r<|u|\}} |f| \,dx\xrightarrow[t\to\infty]{}0.\end{split}\] Fatou's Lemma enables to pass to the limit  to get
\[ \int_\Omega B (c_1|T_r(u- \bu )|)\,dx=0\qquad\text{for every }r>0.\]
Therefore, $B (c_1|T_r(u- \bu )|)=0$ a.e. in $\Omega$ for every $r>0$, and consequently $u= \bu$ a.e. in $\Omega$. \qed

\section{Regularity}\label{sec:reg} 

Our next aim is to provide some regularity results  in the Orlicz-Macinkiewicz scale for the solutions of problem~\eqref{eq:main:mu} with measure data. Note that the key estimates of the proof are interesting by themselves, see Propositions~\ref{prop:grad-est-non-gr},~\ref{prop:pre-est}, and~\ref{prop:grad-est}. It is worth pointing out that we get the regularity  of the whole function $u$ and of its full gradient $\nabla u$, not only of the truncation $T_k(u)$ and its gradient.

\medskip

 The classical way of introducing the Orlicz-Marcinkiewicz spaces goes via rearrangement approach, see e.g.~\cite{Ci04,oneil}. The decreasing rearrangement $f^* : [0,\infty)\to [0, \infty]$ of a measurable function $f : \Omega\to\R$ is the unique
right-continuous, non-increasing function equidistributed with $f$, namely,
\[f^* (s) = \inf\{t \geq 0 : |\{|f|> t\}| \leq s\}\qquad\text{for }s\geq 0.\]

It's maximal rearrangement is defined as follows
\begin{equation}
\label{starstar}
f^{**}(x)=\frac{1}{x}\int_0^x f^*(t)\,dt\qquad\text{and}\qquad f^{**}(0)=f^*(0).
\end{equation}

\begin{definition}[The Orlicz--Marcinkiewicz-type spaces]\label{def:OrMar}
Let $\vp: (0, |\Omega|)\to (0,\infty)$ be a Young function. We define the Orlicz-Marcinkiewicz-type spaces 
\begin{equation}
\label{norm:OM}
{\cal M}^{\vp}(\Omega):=\left\{f\ \text{measurable in }\Omega:\   \ \ \|f\|_{{\cal M}^{\vp}(\Omega)}:=\sup_{s\in(0,|\Omega|)}\frac{f^{**}(s)}{\vp^{-1}(1/s)}<\infty\right\}
\end{equation}
and 
\begin{equation}
\label{norm:OMw}
{\cal M}_w^{\vp}(\Omega):=\left\{f\ \text{measurable in }\Omega: \ \ \|f\|_{{\cal M}_w^{\vp}(\Omega)}:=\limsup_{t\to\infty}\frac{t}{\vp^{-1}(1/|\{|f|>t\}|)}<\infty\right\}.
\end{equation} 
\end{definition} 
 While treated as a case of the Lorentz-type space the notation ${\cal M}^{\vp}(\Omega)= L^{\vp,\infty}(\Omega)$ can be also used.
 
 \begin{remark} It can be shown that $\|\cdot\|_{{\cal M}^{\vp}(\Omega)}$ defines a norm, while $\|\cdot\|_{{\cal M}_w^{\vp}(\Omega)}$ only a quasi-norm. 
 \end{remark}
Orlicz-Marcinkiewicz spaces are intermediate to Orlicz spaces in the sense that
 \[L_\vp (\Omega)\subset {\cal M}^{\vp}(\Omega) \subset {\cal M}_w^{\vp}(\Omega) \subset L_{\vp^{1-\ve}}(\Omega)\qquad\text{for all }\ {\ve\in (0,1)}.\]
Let us stress that   ${\cal M}^{\vp}(\Omega)= {\cal M}_w^{\vp}(\Omega)$ if and only if $\vp$ satisfies
\begin{equation}
\label{cond:int}\barint_0^s\vp^{-1}\left(\frac{1}{r}\right)dr\leq c  \vp^{-1}(1/s).
\end{equation}
In particular, if $p>1$ and $\beta\geq 0$, the function $\vp(t)=t^p\log^\beta(1+t)$ satisfies~\eqref{cond:int} and  hence ${\cal M}^{\vp}(\Omega)= {\cal M}_w^{\vp}(\Omega)$. 

\bigskip

We provide two types of level-sets estimates resulting from the different embeddings discussed in  Section~\ref{sec:emb}.
 
\begin{proposition} \label{prop:grad-est-non-gr}
Let $B$ be an $N$-function. Suppose $v\in {\cal T}_0^{1,B}(\Omega)$ and constants $K>0$ and $r_0\geq 0$ are  such that
\begin{equation}
\label{podpoziomice}
\int_{\{|v|<r\}}B(|\nabla v|)\,dx\leq K r\qquad\text{for }r>r_0.
\end{equation} {
Then 
\[v\in{\cal M}_w^{\Phi_1}(\Omega)\qquad\text{and}\qquad\nabla v\in{\cal M}_w^{\Psi_1}(\Omega),\]
where\begin{equation}
\label{phi1psi1}
\Phi_1(r)=\left(\frac{B(c_1 r)}{Kr}\right)^{N'}  \qquad\text{and}\qquad \Psi_1(r)= \frac{B( r)}{\overline{K}\phi^{-1}(B(r))},
\end{equation}
with $c_1=c_1({\rm diam}\,\Omega)$  and $\overline{K}=2\max\{K,K^{N'}\}$.}
\end{proposition}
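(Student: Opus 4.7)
My plan is to derive both assertions from the hypothesis~\eqref{podpoziomice} by combining the modular Sobolev--Poincar\'e inequality (Proposition~\ref{prop:Sob-Poi}) with a Chebyshev-type level-set estimate and the standard measure-data trade-off.

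For the bound on $v$, I would apply Proposition~\ref{prop:Sob-Poi} to the truncation $T_t v\in W_0^{1,B}(\Omega)$. Since $\nabla T_t v=\mathds{1}_{\{|v|<t\}}\nabla v$, the hypothesis immediately gives
\[
\left(\int_\Omega B^{N'}(c_1\,|T_t v|)\,dx\right)^{1/N'}
\leq c_2 \int_{\{|v|<t\}} B(|\nabla v|)\,dx \leq c_2 K t
\]
for every $t>r_0$. Restricting the integral on the left to $\{|v|>t\}$, where $|T_t v|=t$, yields the level-set bound
\[
|\{|v|>t\}|\leq \left(\frac{c_2 K t}{B(c_1 t)}\right)^{N'}.
\]
Up to absorbing the constant $c_2$ into the $c_1$ appearing inside $\Phi_1$ (equivalently rescaling $t$), this reads $|\{|v|>t\}|\leq 1/\Phi_1(c\,t)$ for some $c>0$ and every $t>r_0$, which by~\eqref{norm:OMw} is exactly $v\in \mathcal{M}_w^{\Phi_1}(\Omega)$.

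For the gradient I would use the classical measure-data decomposition: for any $r,t>0$,
\[
|\{|\nabla v|>t\}|\leq |\{|v|>r\}|+|\{|v|\leq r,\,|\nabla v|>t\}|\leq \left(\frac{c_2 K r}{B(c_1 r)}\right)^{N'}+\frac{K r}{B(t)},
\]
where the first summand is controlled by the previous step and the second follows from Chebyshev applied to $B(|\nabla v|)\mathds{1}_{\{|v|<r\}}$ combined with~\eqref{podpoziomice}. Next I would optimize in $r$ by balancing the two summands; a short manipulation (using $N'-\tfrac{1}{N-1}=1$) turns the balance condition into $\phi(r)\sim B(t)$ with $\phi(r):=r\,\Phi_1(r)=B(c_1 r)^{N'}/(K^{N'} r^{1/(N-1)})$ up to constants. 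Substituting $r=\phi^{-1}(cB(t))$ back produces
\[
|\{|\nabla v|>t\}|\leq \frac{\overline{K}\,\phi^{-1}(B(t))}{B(t)},
\]
with $\overline{K}=2\max\{K,K^{N'}\}$, which is exactly the level-set condition characterizing $\mathcal{M}_w^{\Psi_1}(\Omega)$ through~\eqref{norm:OMw} with $\Psi_1$ as in~\eqref{phi1psi1}.

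The main obstacle I anticipate is the bookkeeping of constants. Because $B$ is a general $N$-function (no $\Delta_2$ condition), multiplicative constants cannot be moved freely through $B$ or through $B^{-1}$, so the precise values of $c_1$ inside $\Phi_1$ and of $\overline{K}$ inside $\Psi_1$ must be tracked carefully to fit both level-set bounds cleanly into~\eqref{norm:OMw}. The convexity of $\Phi_1$ and $\Psi_1$ as Young functions together with the standard inequality $\Phi(\lambda t)\leq \lambda\Phi(t)$ for $\lambda\leq 1$ will then suffice to translate the level-set bounds into the finiteness of the $\limsup$ in~\eqref{norm:OMw}.
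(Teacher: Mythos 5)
Your proposal follows the same path as the paper's proof: apply the modular Sobolev--Poincar\'e inequality (Proposition~\ref{prop:Sob-Poi}) to $T_r(v)$ to get the superlevel-set bound \eqref{|u|>r} for $v$, then decompose $|\{B(|\nabla v|)>s\}|$ into $|\{|v|>r\}|+|\{B(|\nabla v|)>s,\ |v|\leq r\}|$, control the second piece by Chebyshev and \eqref{podpoziomice}, and balance the two terms by the substitution $r=\phi^{-1}(s)$ followed by $\theta=B^{-1}(s)$. The only difference from the paper is cosmetic (you parametrize by $t=|\nabla v|$ rather than $s=B(|\nabla v|)$), and the constant-bookkeeping concern you flag (the Sobolev constant $c_2$ and $K$ cannot be slid through $B$, $B^{-1}$, or $\phi^{-1}$ without a $\Delta_2$-type assumption) is in fact handled equally loosely in the paper itself.
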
 

\begin{proof}
First of all we notice that since $v\in {\cal T}_0^{1,B}(\Omega)$, then of course $T_r(v) \in W_0^{1,B}(\Omega)$ for every $r>0$. Therefore by the Sobolev-Poincar\'e inequality at Proposition~\ref{prop:Sob-Poi}, we get
\[\left(\int_\Omega B^{N'}(c_1|T_r(v)|)dx\right)^\frac{1}{N'}\leq  c_2\int_{\{|v|<r\}} B ( |\nabla v|)dx.\]
To estimate the left-hand side from below we note that for every $r>0$\begin{equation}
\label{Tr}\{c_1|T_r(v)|>c_1r\}=\{|v|>r\}.
\end{equation}
Thus\begin{equation}\label{r-est} 
|\{|v|>r\}| B^{N'}(c_1r) \leq \int_{\{|v|<r\}} B^{N'} (c_1 |T_r(v) |)dx. 
\end{equation}

Summing up the above observations and taking into account~\eqref{podpoziomice} we obtain
\[|\{|v|>r\}| B^{N'}(c_1r)\leq \left(\int_{\{|v|<r\}} B (|\nabla v|)dx\right)^{N'}\leq \left(Kr\right)^{N'} \qquad \text{for }r>r_0,\]
implying
\begin{equation}\label{|u|>r}
|\{|v|>r\}| \leq \left(\frac{Kr}{B(c_1r)}\right)^{N'}\qquad\text{for }r>r_0.
\end{equation}

By using~\eqref{|u|>r} we deduce that
\[|\{B(|\nabla v|)>s \}|\leq |\{ |v|> r\}|+|\{B(|\nabla v|)>s,\, |v|\leq r\}|\leq (Kr/B(r))^{N'}+Kr/s\qquad\text{for }r>r_0,\, s>0.\]
Now recall the definition of $\phi$ and set $r=\phi^{-1}(s)$. Then, for $s> \phi (r_0)$, we get
\[|\{B(|\nabla v|)>s \}|\leq \overline{K} \frac{\phi^{-1}(s)}{s}\]
 and it suffices to take $\theta=B^{-1}(s)$ to ensure that~for $s>\phi(r_0)$
\begin{equation}\label{|gradu|>s}
 |\{ |\nabla v| >\theta \}|\leq \overline{K} \frac{\phi^{-1}(B(\theta))}{B(\theta)}.
\end{equation}
Taking into account~\eqref{|u|>r} and~\eqref{|gradu|>s} we get the claim.\qed
\end{proof}

\bigskip

Further we employ the following estimates by Cianchi and Mazy'a~\cite{CiMa}. Note that this result follows independently of the type of the growth of $B$.
\begin{proposition}[cf. Lemma 4.1, \cite{CiMa}]\label{prop:pre-est} Let $B$ be an $N$-function and $\Omega$ is a Lipschitz bounded domain. Suppose $v\in {\cal T}_0^{1,B}(\Omega)$ and  there exist constants $K>0$ and $r_0\geq 0$, such that~\eqref{podpoziomice} is satisfied.
\begin{itemize} 
\item[(a)] If~\eqref{intB}$_1$, then there exists a constant $c=c(N)$  such that  
\begin{equation}\label{|u|>r-inf}
|\{|v|>r\}|  \leq  \frac{Kr}{B_N(c r^\frac{1}{N'}/K^\frac{1}{N})}\qquad\text{for }r>r_0.
\end{equation}
\item[(b)] If~\eqref{intB}$_2$, then there exists a constant $r_1=r_1(r_0,N,M)$ such that 
\begin{equation}\label{|u|>r-fin}
|\{|v|>r\}| =0\qquad\text{for }r>r_1.
\end{equation} 
\end{itemize}
\end{proposition}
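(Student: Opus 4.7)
The plan is to apply the optimal Sobolev--Orlicz embeddings of Section~\ref{sec:emb} to the truncation $T_r(v)$. Since $v\in{\cal T}_0^{1,B}(\Omega)$, each $T_r(v)$ lies in $W_0^{1,B}(\Omega)$, and the hypothesis~\eqref{podpoziomice} rewrites as the modular bound $\int_\Omega B(|\nabla T_r(v)|)\,dx\leq Kr$ for $r>r_0$. Up to measure zero, $\{|v|>r\}=\{|T_r(v)|\geq r\}$, so a Chebyshev step in~(a) and an $L^\infty$-versus-level comparison in~(b) will convert control of $T_r(v)$ into the claimed level-set estimates for $v$.

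\medskip

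For part~(a), under~\eqref{intB}$_1$, I would invoke a modular strengthening of Cianchi's embedding~\eqref{W1LBinLBN}, of the form
$$
\int_\Omega B_N\!\Bigl(\tfrac{c|u|}{M^{1/N}}\Bigr)\,dx\leq M,\qquad M:=\int_\Omega B(|\nabla u|)\,dx,\qquad u\in W_0^{1,B}(\Omega),
$$
with $c=c(N)$, extractable from the rearrangement proof of~\cite[Theorem~3]{Ci97}. Taking $u=T_r(v)$, using $M\leq Kr$, and applying Chebyshev at level $r$ yields
$$
|\{|v|>r\}|\,B_N\!\Bigl(\tfrac{cr}{(Kr)^{1/N}}\Bigr)\leq\int_\Omega B_N\!\Bigl(\tfrac{c|T_r(v)|}{M^{1/N}}\Bigr)\,dx\leq M\leq Kr,
$$
and the identity $r/(Kr)^{1/N}=r^{1/N'}/K^{1/N}$ gives exactly~\eqref{|u|>r-inf}. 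For part~(b), under~\eqref{intB}$_2$, I would apply~\eqref{W1LBinLinf} in its quantitative form from~\cite{Ta89,Ci96}: there exists a function $G$ with $G(M)/M\to 0$ as $M\to\infty$ (the sublinearity being precisely what the finite integral in~\eqref{intB}$_2$ encodes) such that $\|u\|_{L^\infty(\Omega)}\leq G(\int_\Omega B(|\nabla u|)\,dx)$ for every $u\in W_0^{1,B}(\Omega)$. Applied to $T_r(v)$, this yields $\|T_r(v)\|_{L^\infty}\leq G(Kr)$, and sublinearity gives some $r_1=r_1(r_0,N,K)$ with $G(Kr)<r$ for all $r>r_1$. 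Hence on $\{|v|<r\}$ one has $|v|=|T_r(v)|\leq G(Kr)<r$, forcing $\{|v|>r\}=\emptyset$ for $r>r_1$ and proving~\eqref{|u|>r-fin}.

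\medskip

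The hardest part will be justifying the \emph{modular} (as opposed to Luxemburg-norm) versions of both embeddings with the correct scaling in $M$. The naive norm inequalities $\|u\|_{L_{B_N}}\leq c\|\nabla u\|_{L_B}$ and $\|u\|_\infty\leq c\|\nabla u\|_{L_B}$ are too weak here: in~(a) they produce only $|\{|v|>r\}|\leq 1/B_N(1/(cK))$, a bound independent of~$r$ and hence useless for regularity in the Marcinkiewicz scale, while in~(b) they give $\|T_r(v)\|_\infty\lesssim Kr$, linear in~$r$ and so unable to force the super-level set to vanish. The refined forms are implicit in the rearrangement-based proofs of~\cite{Ci97,Ta89,Ci96} via Hardy-type inequalities on $(0,|\Omega|)$, and tracking the precise scaling that yields $r^{1/N'}/K^{1/N}$ inside $B_N$ in~(a), and the sublinearity of $G$ in~(b), is the delicate point.
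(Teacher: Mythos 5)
Your reconstruction matches the proof of Lemma~4.1 in~\cite{CiMa}, which is exactly what the paper cites here without reproducing it (adding only the remarks that condition~\eqref{int0B} is dispensable and that the argument extends from $W_0^{1,B}(\Omega)$ to ${\cal T}_0^{1,B}(\Omega)$). In particular, the Chebyshev step with the modularly scaled Cianchi inequality in~(a) and the sublinear $L^\infty$ bound in~(b) are precisely the ingredients in the cited proof, and the technical point you flag --- extracting these modular forms, with the correct $M^{1/N}$ scaling, from the rearrangement arguments of~\cite{Ci97,Ta89,Ci96} --- is exactly the content the paper delegates to Cianchi and Maz'ya.
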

Despite~\cite[Lemma 4.1]{CiMa} is formulated assuming~\eqref{int0B}, it is explained in~\cite{CiMa} that it is not necessary. Moreover, the proof admits to consider functions from ${\cal T}_0^{1,B}(\Omega)$ not only $W_0^{1,B}(\Omega)$ as in the statement therein.

\medskip

Now we can infer gradient estimates. 
\begin{proposition} \label{prop:grad-est}
Let $B$ be an $N$-function satisfying~\eqref{int0B}  {and recall  $\phi_N$  {and $B_N$} given by~\eqref{BN}}. Suppose $v\in {\cal T}_0^{1,B}(\Omega)$ and constants $K>0$ and $r_0\geq 0$ are such that~\eqref{podpoziomice} holds. 
\begin{itemize}
\item[(a)] If~\eqref{intB}$_1$, then   $v\in{\cal M}_w^{\Phi_2}(\Omega)$ and $\nabla v\in{\cal M}_w^{\Psi_2}(\Omega)$,  {where \begin{equation}
\label{phi2psi2}
\Phi_2(r)=\frac{B_N(\bar{c}r^{(N-1)/N} )}{r}\qquad \text{and}\qquad \Psi_2 (r)=\frac{B(r)}{\phi_N(r) }
\end{equation} 
with a constant $\bar{c}=\bar{c}(N,K)$}.
\item[(b)] If~\eqref{intB}$_2$, then   {$v\in L^\infty(\Omega)$ and $\nabla v\in{\cal M}_w^{B}(\Omega)$. }
\end{itemize} \end{proposition}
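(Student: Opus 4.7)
The plan is to combine the level-set estimates of Proposition~\ref{prop:pre-est} (which already apply under the hypothesis~\eqref{podpoziomice}) with a standard Chebyshev-type splitting, then translate the resulting level-set bounds into membership in the weak Orlicz--Marcinkiewicz spaces.

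For the statement on $v$, part (a) is essentially a reformulation of~\eqref{|u|>r-inf}: the inequality $|\{|v|>r\}|\le Kr/B_N(cr^{1/N'}/K^{1/N})$ says that $\Phi_2(r)\,|\{|v|>r\}|$ stays bounded with $\bar c=c/K^{1/N}$, so $v\in\mathcal{M}_w^{\Phi_2}$ follows directly from Definition~\ref{def:OrMar}. In case (b), \eqref{|u|>r-fin} gives $|\{|v|>r\}|=0$ for $r>r_1$, which by definition is $v\in L^\infty(\Omega)$.

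For the gradient I would use the splitting
\[
|\{|\nabla v|>\theta\}|\le|\{|v|>r\}|+|\{|\nabla v|>\theta,\ |v|\le r\}|,
\]
and control the second term by Chebyshev together with hypothesis~\eqref{podpoziomice}:
\[
|\{|\nabla v|>\theta,\ |v|\le r\}|\le\frac{1}{B(\theta)}\int_{\{|v|<r\}}B(|\nabla v|)\,dx\le\frac{Kr}{B(\theta)}.
\]
In case (b), fixing $r=r_1$ makes the first term vanish by~\eqref{|u|>r-fin}, giving $|\{|\nabla v|>\theta\}|\le Kr_1/B(\theta)$. Using that $B$ is convex with $B(0)=0$, so that $B^{-1}(Cs)\le C\,B^{-1}(s)$ for $C\ge 1$, one obtains $\theta\le Kr_1\,B^{-1}(1/|\{|\nabla v|>\theta\}|)$, i.e. $\nabla v\in\mathcal{M}_w^{B}$.

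In case (a), I would insert~\eqref{|u|>r-inf} for the first term and then optimize in $r$, balancing the two summands by choosing $r$ so that $B_N(\bar c r^{(N-1)/N})=B(\theta)$. Invoking $B_N(t)=B(H_N^{-1}(t))$ and $\phi_N=H_N^{N'}$ from~\eqref{BN}, this yields $r\simeq\phi_N(\theta)$ (up to a constant absorbed in $\bar c$), and substituting back into $Kr/B(\theta)$ gives
\[
|\{|\nabla v|>\theta\}|\le\frac{C\,\phi_N(\theta)}{B(\theta)}=\frac{C}{\Psi_2(\theta)},
\]
hence $\nabla v\in\mathcal{M}_w^{\Psi_2}$. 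The main technical point I expect to check carefully is this algebraic step: verifying that the balancing choice of $r$ reproduces exactly $\Psi_2(\theta)=B(\theta)/\phi_N(\theta)$ using the identities defining $H_N$, $B_N$, and $\phi_N$, and tracking constants so they can be absorbed into $\bar c$. Once the level-set bound is in hand, passing to the weak Orlicz--Marcinkiewicz norm via Definition~\ref{def:OrMar} is only a matter of inverting a monotone function.
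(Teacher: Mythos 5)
Your proposal is correct and follows essentially the same route as the paper: read off the $v$-estimates from Proposition~\ref{prop:pre-est}, split $|\{|\nabla v|>\theta\}|$ into a super-level set of $v$ plus a Chebyshev term controlled by~\eqref{podpoziomice}, and in case (a) balance the two terms by the same choice $r\sim\phi_N(\theta)$, using the identity $B_N^{-1}(B(\theta))=H_N(\theta)$ to recover $\Psi_2$. The only cosmetic difference is that the paper phrases the splitting in terms of $|\{B(|\nabla v|)>s\}|$ and substitutes $\theta=B^{-1}(s)$ at the end, whereas you work directly with $\theta$; the content is identical.
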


\begin{proof} We notice first that~\eqref{podpoziomice} implies\[
|\{B(|\nabla v|)>s,\, |v|\leq r\}|\leq\frac{1}{s}\int_{\{B(|\nabla v|)>s,\, |v|\leq r\}} B(|\nabla v|)\,dx\leq K\frac{r}{s}\quad\text{for }r>r_0\text{ and }s> 0.\]

Let us concentrate on \textit{(a)}.  {We infer that $v\in {\cal M}_w^{\Phi_2}(\Omega)$ directly from~\eqref{|u|>r-inf}. Furthermore, since}
\begin{equation}
\label{B>s}
|\{B(|\nabla v|)>s \}|\leq |\{ |v|> r\}|+|\{B(|\nabla v|)>s,\, |v|\leq r\}|,
\end{equation} 
then by \eqref{|u|>r-inf} we get
\[|\{B(|\nabla v|)>s \}|\leq   \frac{Kr}{B_N(cr^\frac{1}{N'}/ {K}^\frac{1}{N})}+K\frac{r}{s}\qquad\text{for }r>r_0,\, s>0.\]

Substitute $r=( {K}^{1/N}B_N^{-1}(s)/c)^{N'}$ and consider $s\geq B_N(ct^{1/N'}/ {K}^{1/N})$. Then 
\[|\{B(|\nabla v|)>s \}|\leq 2\left(\frac{{K}^{\frac{1}{N}}}{c}\right)^{N'} \frac{(B_N^{-1}(s))^{N'}}{s}.\]
Taking $\theta=B^{-1}(s)$ we obtain~that there exists a constant $K_1=K_1(N,K)$  such that  
\begin{equation}\label{grad-u-inf}|\{ |\nabla v| >\theta \}|\leq K_1  \frac{(B_N^{-1}(B(\theta)))^{N'}}{B(\theta)}\quad\text{for }  \theta > 0 
\end{equation}
implying \textit{(a)}.

Now we turn to prove \textit{(b)}. { Boundedness of $v$ results directly from~\eqref{|u|>r-fin}. For estimating super-level set of its gradient  we use again~\eqref{B>s} to get}
\[|\{B(|\nabla v|)>s \}|\leq  K\frac{r}{s}\quad\text{for }r>r_2=\max\{r_0,r_1\}\text{ and }s> 0.\]
Taking $\theta=B^{-1}(s)$ we obtain
\begin{equation}\label{grad-u-fin}|\{ |\nabla v| >\theta \}|\leq  \frac{Kr_3}{B(\theta)}\quad\text{for }  \theta > 0 
\end{equation} 
for a constant $r_2=r_2(r_0,K,N)$,   
implying \textit{(b)}.\qed \end{proof}

\bigskip

Let us carry on by giving  regularity  results of approximable solutions to~\eqref{eq:main:mu} in the scale of Orlicz-Marcinkiewicz spaces. Let us mention that  results of this type were already obtained recently in the reflexive case in~\cite{CiMa,IC-grad} and in nonreflexive spaces~\cite{ACCZG,Baroni-OM}.  

\medskip

\begin{theorem}[Estimates on approximable solutions]
\label{theo:reg}
Assume  $\mu\in {\cal M}(\Omega)$  and  $A: \Omega \times \R \times \rn\to\rn$ a function satisfying assumptions (A1), (A2), (A3)$_w$ and (A4). Recall function $\Phi_1$, $\Psi_1$, and $\Phi_2$, $\Psi_2$, given by~\eqref{phi1psi1} and~\eqref{phi2psi2}, respectively. Then   every approximable solution $u\in{\cal T}^{1,B}(\Omega)$ to~\eqref{eq:main:mu} satisfies\[u\in{\cal M}_w^{\Phi_1}(\Omega)\qquad\text{and}\qquad\nabla u\in{\cal M}_w^{\Psi_1}(\Omega). \]

Moreover,
\begin{itemize}
\item[(a)] if $B$ satisfies~\eqref{intB}$_1$, then   $u\in{\cal M}_w^{\Phi_2}(\Omega)$ and $\nabla u\in{\cal M}_w^{\Psi_2}(\Omega)$;
\item[(b)] if $B$ satisfies~\eqref{intB}$_2$, then  $u\in L^\infty(\Omega)$ and $\nabla u\in{\cal M}_w^{B}(\Omega)$.
\end{itemize}
\end{theorem}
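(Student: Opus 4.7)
The plan is to reduce Theorem~\ref{theo:reg} to the modular level-set estimates established in Propositions~\ref{prop:grad-est-non-gr},~\ref{prop:pre-est}, and~\ref{prop:grad-est}. All three statements take as their quantitative input the single bound
\[
\int_{\{|v|<r\}} B(|\nabla v|)\,dx \leq K r \qquad \text{for every } r > r_0,
\]
so the whole task reduces to verifying~\eqref{podpoziomice} for $v = u$ with a constant $K$ governed by $\|\mu\|(\Omega)$ and $r_0=0$. Once this is done, Proposition~\ref{prop:grad-est-non-gr} yields $u \in {\cal M}_w^{\Phi_1}(\Omega)$ and $\nabla u \in {\cal M}_w^{\Psi_1}(\Omega)$, while Proposition~\ref{prop:grad-est} (combined with Proposition~\ref{prop:pre-est}) delivers the dichotomy (a)/(b) according to whether~\eqref{intB}$_1$ or~\eqref{intB}$_2$ holds.

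First I would derive the level-set estimate at the level of the approximating weak solutions $u_k \in W_0^{1,B}(\Omega)$ constructed in Step~1 of the proof of Theorem~\ref{theo:main-mu}. Since $T_r(u_k) \in W_0^{1,B}(\Omega) \cap L^\infty(\Omega)$, it is admissible in the weak formulation~\eqref{weak:u-k}, and using $\nabla T_r(u_k) = \nabla u_k \, \mathds{1}_{\{|u_k|<r\}}$ together with the coercivity bound~\eqref{bound} of (A2) gives
\[
d_0 \int_{\{|u_k|<r\}} B(|\nabla u_k|)\,dx \leq \int_\Omega A(x,u_k,\nabla u_k) \cdot \nabla T_r(u_k)\,dx = \int_\Omega f_k\, T_r(u_k)\,dx \leq r\, \|f_k\|_{L^1(\Omega)} \leq 2 r\, \|\mu\|(\Omega),
\]
where the final estimate uses the uniform $L^1$-bound $\|f_k\|_{L^1(\Omega)} \leq 2\|\mu\|(\Omega)$ obtained in Step~5 of the proof of Theorem~\ref{theo:main-mu} (and which reduces to $\|f\|_{L^1}$ in the $L^1$-data case).

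Next I would pass to the limit $k \to \infty$. For every regular value $r$ of $|u|$, Lemma~\ref{lem:ae} ensures $\mathds{1}_{\{|u_k|<r\}} \to \mathds{1}_{\{|u|<r\}}$ a.e., and by~\eqref{conv:basic} we have $\nabla T_r(u_k) \xrightharpoonup{*} \nabla T_r(u)$ weakly-$*$ in $L_B(\Omega)$, which together with the inclusion $L_B(\Omega) \subset L^1(\Omega)$ on the bounded domain $\Omega$ also yields weak $L^1$ convergence. Since the modular functional $\zeta \mapsto \int_\Omega B(|\zeta|)\,dx$ is sequentially weakly lower semicontinuous on $L^1(\Omega)$ by convexity of $B$, I obtain
\[
\int_{\{|u|<r\}} B(|\nabla u|)\,dx \leq \liminf_{k\to\infty} \int_{\{|u_k|<r\}} B(|\nabla u_k|)\,dx \leq \frac{2 \|\mu\|(\Omega)}{d_0}\, r.
\]
The restriction to regular values of $r$ is then removed by taking an increasing sequence of regular values $r_n \uparrow r$ and invoking monotone convergence on the nested sets $\{|u|<r_n\}$, so that~\eqref{podpoziomice} holds for every $r>0$ with $K = 2\|\mu\|(\Omega)/d_0$ and $r_0=0$. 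Plugging this into Propositions~\ref{prop:grad-est-non-gr},~\ref{prop:pre-est}, and~\ref{prop:grad-est} completes the proof.

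The main obstacle will be the lower semicontinuity step, since the nonreflexive setting prevents one from extracting strongly convergent subsequences of the gradients and direct lower semicontinuity of $\int B(|\cdot|)\,dx$ under the weak-$*$ topology of $L_B$ is not a textbook fact; the remedy is to exploit~\eqref{conv:basic}, which has been proved for free in Theorem~\ref{theo:main-mu}, and to transfer the question to the classical weak $L^1$ lower semicontinuity of convex nonnegative integral functionals via the continuous embedding $L_B(\Omega) \hookrightarrow L^1(\Omega)$. Apart from this point the argument is essentially a single integration by parts against a truncation, which makes the reduction to the propositions of Section~\ref{sec:reg} transparent.
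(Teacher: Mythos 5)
Your argument is correct in substance and matches the route the paper takes, which is simply to observe that the a priori estimate~\eqref{Bapriori} established for the approximating weak solutions $u_k$ passes to the approximable solution $u$ and then to feed~\eqref{podpoziomice} into Propositions~\ref{prop:grad-est-non-gr},~\ref{prop:pre-est}, and~\ref{prop:grad-est}. The paper's own proof is extremely terse and leaves the lower-semicontinuity step entirely implicit; your resolution of it via the observation that $\sigma(L_B,E_{\widetilde{B}})$-convergence dualized against $L^\infty\subset E_{\widetilde{B}}$ yields weak $L^1$ convergence, followed by weak $L^1$ lower semicontinuity of $\zeta\mapsto\int_\Omega B(|\zeta|)\,dx$, is exactly the correct way to close that gap, and your care about regular values and the monotone-convergence mop-up is fine (if slightly more than needed).

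One small omission: the three propositions you invoke all hypothesize $v\in{\cal T}_0^{1,B}(\Omega)$, with the subscript zero, while Definition~\ref{def:as:mu} only gives $u\in{\cal T}^{1,B}(\Omega)$. You need to note that $T_t u_k\in W_0^{1,B}(\Omega)$, that by~\eqref{conv:basic} one has $T_t u_k\to T_t u$ in $L^1$ and $\nabla T_t u_k\xrightharpoonup{*}\nabla T_t u$ in $L_B$, and that $W_0^{1,B}(\Omega)$ is closed under this topology by its very definition~\eqref{W0}, so that $T_t u\in W_0^{1,B}(\Omega)$ for every $t>0$ and hence $u\in{\cal T}_0^{1,B}(\Omega)$. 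This is precisely the content of the paper's otherwise cryptic second paragraph. A second, minor point: for a general approximating sequence in the sense of Definition~\ref{def:as:mu} one does not get $\|f_k\|_{L^1(\Omega)}\le 2\|\mu\|(\Omega)$ (that bound is specific to the mollification construction of Step~5), but Banach--Steinhaus applied to the weak-$*$ convergent sequence $\{f_k\}_k\subset\mathcal{M}(\Omega)$ still gives $\sup_k\|f_k\|_{L^1(\Omega)}<\infty$, which is all that is needed to produce a finite $K$ in~\eqref{podpoziomice}.
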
  
\begin{proof} Since the approximable solutions to~\eqref{eq:main:mu} satisfy~\eqref{podpoziomice}, as a direct consequence of Propositions~\ref{prop:grad-est-non-gr}, \ref{prop:pre-est}, and~\ref{prop:grad-est}, we infer the following information on their regularity.

According to definition in~\eqref{W0} the space $W^{1,B}_0(\Omega)$ is closed in weak-* topology and in~\eqref{conv:basic} we infer that \[\nabla T_t u_k \xrightharpoonup[k\to\infty]{*}  \nabla T_t u\quad \text{weakly-$*$ in } L^B(\Omega).\] 
 \qed
\end{proof}
\smallskip

\noindent Recall that under condition~\eqref{cond:int} on $\vp$, we can substitute above each  ${\cal M}_w^{\vp}(\Omega)$ with ${\cal M}^{\vp}(\Omega)$, cf. Definition~\ref{def:OrMar}.

\bigskip

We give examples related to the case of the Zygmund-type modular functions and extending this setting. 
 
\begin{example}[Zygmund-type functions] Consider $B(t)\sim t^p\log^\beta(1+t)$  with $p>1$ and $\beta\geq 0$ near infinity. Then $B \in \Delta_2$. Our framework admitts to use~\cite[Lemma 4.5]{CiMa} to get estimates for  {approximable solutions} to~\eqref{eq:main:mu} with {$L^1$--data} as in~\cite[Example 3.4]{CiMa}, in particular implying what follows.
\begin{itemize}
\item[(slow)] If $1<p<n$, then $u\in {\cal M}^{\Phi}(\Omega)=L^{\frac{n(p-1)}{n-p},\infty}(\log L)^\frac{\beta p}{n-p}(\Omega)$ and $\nabla u\in {\cal M}^{\Psi}(\Omega)=L^{\frac{(p-1)n}{n-1},\infty}(\log L)^\frac{\beta n}{n-1}(\Omega).$ 
\item[(fast)] If $p>n$, or $p=n$ and $\beta>n-1$, then $u\in L^\infty$ and $\nabla u\in {\cal M}^B(\Omega)=L^{n,\infty}(\log L)^\beta(\Omega).$
\end{itemize}
Let us point out that there is a  misprint in powers in~\cite[Example 3.4]{CiMa}.
\end{example}

\begin{example}[Outside $\Delta_2$ or polynomial control] We have the following examples.
\begin{itemize}
\item[(slow)] If $B(t)=t\log(1+ t)\in \Delta_2$, but is not controlled by two power functions greater than $1$. Indeed, $B(t)\not\geq t^{1+\ve}$ for any $\ve>0$. Then $u\in {\cal M}^{t\log^{N'}(1+t)}(\Omega)$ and $\nabla u\in{L\log L}(\Omega)$.
\item[(fast)]  If $B(t)=t\exp t\not\in \Delta_2$, it grows faster than any power and then $u\in L^\infty$ and $\nabla u\in {\cal M}_w^{t\exp t}(\Omega).$
\item[(***)] We can also infer estimates of the Orlicz-Marcinkiewicz--type, when the modular function is irregular:  trapped between two power--type functions, but does not satisfy $\Delta_2$-condition, see Example~\ref{ex:poly-non-D2} or~\cite{BDMS}.
\end{itemize}
\end{example}

\section*{References} 
\def\ocirc#1{\ifmmode\setbox0=\hbox{$#1$}\dimen0=\ht0 \advance\dimen0
  by1pt\rlap{\hbox to\wd0{\hss\raise\dimen0
  \hbox{\hskip.2em$\scriptscriptstyle\circ$}\hss}}#1\else {\accent"17 #1}\fi}
  \def\cprime{$'$} \def\ocirc#1{\ifmmode\setbox0=\hbox{$#1$}\dimen0=\ht0
  \advance\dimen0 by1pt\rlap{\hbox to\wd0{\hss\raise\dimen0
  \hbox{\hskip.2em$\scriptscriptstyle\circ$}\hss}}#1\else {\accent"17 #1}\fi}
  \def\ocirc#1{\ifmmode\setbox0=\hbox{$#1$}\dimen0=\ht0 \advance\dimen0
  by1pt\rlap{\hbox to\wd0{\hss\raise\dimen0
  \hbox{\hskip.2em$\scriptscriptstyle\circ$}\hss}}#1\else {\accent"17 #1}\fi}
  \def\cprime{$'$}

\end{document}